\def\to{\longrightarrow}
\def\QQ{{\mathbb Q}}
\def\ZZ{{\mathbb Z}}
\def\S{{\mathcal{S}}}
\newtheorem{theorem}{Theorem}
\newtheorem{lemma}[theorem]{Lemma}
\newtheorem{proposition}[theorem]{Proposition}
\newtheorem{rmk}[theorem]{Remark}
\renewcommand{\restriction}{\mathord{\upharpoonright}}
\title{$H^2(\mathbf{SL_3}(\mathbb{Z}[t]); \mathbb{Q}$) is infinite dimensional}
\author{Morgan Cesa and Brendan Kelly}
\begin{document}
\maketitle

\begin{abstract}
We prove that $H^2(\mathbf{SL_3}(\mathbb{Z}[t]); \mathbb{Q})$ is infinite dimensional. The proof follows an outline similar to recent results by Cobb, Kelly, and Wortman, using the Euclidean building for $\mathbf{SL_3}(\mathbb{Q}((t^{-1})))$ and a Morse function from Bux-K\"ohl-Witzel.
\end{abstract}

\section{Introduction}\label{intro}
Krsti\`c-McCool proved that $\mathbf{SL_3}(\mathbb{Z}[t])$ is not finitely presented \cite{KrsticMcCool1999}. In \cite{BuxMohammadiWortman2010}, Bux-Mohammadi-Wortman show that $\mathbf{SL_n}(\mathbb{Z}[t])$ is not $FP_{n-1}$. In general, a group $G$ being of type $FP_k$ implies that $H^k(G;M)$ must be finitely generated, {where $M$ is a $\mathbb{Z}G$-module}.  

In \cite{Wortman2013}, Wortman exhibits a finite index subgroup  $\Gamma \leqslant \mathbf{SL_n}(\mathbb{F}_q[t])$ such that $H^{n-1}(\Gamma; \mathbb{F}_p)$ is infinite dimensional. In \cite{Cobb2015}, Cobb shows that $H^2(\mathbf{SL_2}(\mathbb{Z}[t,t^{-1}]); \mathbb{Q})$ is infinite dimensional. In \cite{Kelly2013}, Kelly exhibits a finite index subgroup $\Gamma \leqslant \mathbf{B_n}(\mathbb{F}_q[t,t^{-1}])$ such that $H^2(\Gamma; \mathbb{F}_p)$ is infinite dimensional, where $\mathbf{B_n}(\mathbb{F}_p[t,t^{-1}])$ is the upper triangular subgroup of $\mathbf{SL_n}(\mathbb{F}_p[t,t^{-1}])$ and $p\neq2$.

 In this paper, we prove the following result:
 \begin{theorem} \label{theorem:main} $H^2(\mathbf{SL_3}(\ZZ[t]); \QQ)$ is infinite dimensional.
 \end{theorem}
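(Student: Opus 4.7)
The plan is to adapt the strategy of Cobb-Kelly-Wortman to the present setting. Let $K = \QQ((t^{-1}))$ and let $X$ denote the Bruhat-Tits building of $\mathbf{SL_3}(K)$, a 2-dimensional Euclidean building of type $\tilde{A}_2$. The group $\Gamma = \mathbf{SL_3}(\ZZ[t])$ acts on $X$ through the inclusion $\ZZ[t] \hookrightarrow K$. As a first step, I would apply the Bux-K\"ohl-Witzel Morse function $h : X \to \RR$, defined via a Busemann function at an ideal simplex stabilized by the standard Borel, to produce a filtration of $X$ by sublevel sets $X_{\leq s}$. Each $X_{\leq s}$ is cocompact modulo the action of $\Gamma$ together with the horoball stabilizers, which intersect $\Gamma$ in arithmetic subgroups of parabolic subgroups of $\mathbf{SL_3}(\ZZ)$.

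Next, using the spectral sequence of the filtration together with the BKW description of descending links, I would extract 2-cocycle representatives for classes in $H^2(\Gamma; \QQ)$ from the attaching data of cells added across successive critical heights. At each such height, the descending link of $h$ contains 1-cycles whose attaching maps into the current sublevel set produce candidate 2-dimensional cohomology classes; the infinite residue field $\QQ$ of $K$ ensures that these links carry infinite-dimensional top rational homology, yielding infinitely many candidates. Concretely, for each $n \in \NN$ I would construct an apartment $A_n \subset X$ by translating a fixed reference apartment by a diagonal element $\tau_n = \mathrm{diag}(t^{a_n}, t^{b_n}, t^{-a_n - b_n}) \in \mathbf{SL_3}(K)$ with suitably increasing exponents, and produce both a compactly supported 2-cycle $Z_n$ in $A_n$ and a localized 2-cocycle $\varphi_n$ near $A_n$.

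The main obstacle, and the technical heart of the argument, will be establishing the linear independence of the classes $[\varphi_n]$ in $H^2(\Gamma; \QQ)$. I expect to do this via the Kronecker pairing: the matrix $\langle [\varphi_n], [Z_m] \rangle$ should be upper-triangular with non-zero diagonal entries, with the triangularity enforced by height considerations for $h$ (so that $\varphi_n$ vanishes on $Z_m$ whenever $m$ lies below a threshold depending on $n$), and the non-vanishing of the diagonal coming from a direct combinatorial computation in the chosen apartment. A secondary difficulty is that each locally defined $\varphi_n$ must extend to a global cocycle on $\Gamma \backslash X$; any obstruction to such an extension lies in finite-dimensional groups such as $H^{\ast}(\mathbf{SL_3}(\ZZ); \QQ)$, and since those groups cannot absorb an infinite family, cofinitely many of the $[\varphi_n]$ survive to give the desired infinite-dimensional subspace of $H^2(\Gamma;\QQ)$.
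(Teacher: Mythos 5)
Your proposal identifies the right setting (the building $X$ for $\mathbf{SL_3}(\QQ((t^{-1})))$, a Bux--K\"ohl--Witzel-style Morse function, height-filtered pairings of local cycles and cocycles) and the paper really does follow that broad outline. However, there are two gaps that the paper's argument is specifically built to address and that your sketch does not resolve.

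First, your plan runs the filtration argument directly on $X$, asserting that the sublevel sets $X_{\leq s}$ are ``cocompact modulo the action of $\Gamma$ together with the horoball stabilizers.'' For $\Gamma = \mathbf{SL_3}(\ZZ[t])$ this fails: unlike $\mathbf{SL_3}(\mathbb{F}_q[t])$, the group $\mathbf{SL_3}(\ZZ[t])$ is not a lattice in $\mathbf{SL_3}(\QQ((t^{-1})))$, the stabilizer of the base vertex is all of $\mathbf{SL_3}(\ZZ)$, and already the link of $x_0$ modulo $\Gamma_{x_0}$ is infinite. So $\Gamma\backslash X_{\leq s}$ is not compact at any height, and you cannot invoke BKW finiteness machinery on sublevel sets to manufacture a well-behaved spectral sequence converging to $H^*(\Gamma;\QQ)$. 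The paper sidesteps this by replacing $X$ with a custom $2$-connected $\Gamma$-complex $Y$: start from $Y_0 = \Gamma\cdot\mathcal{C}_0$ (connected by Suslin's theorem, with stabilizers of type $F_1$) and $\Gamma$-equivariantly attach free cells to kill $\pi_1$ and $\pi_2$. This gives finitely many orbits of cells with nontrivial stabilizer and makes the equivariant homology spectral sequence $E^2_{p,q}=H_p(\Gamma\backslash Y;\{H_q(\Gamma_\sigma;\QQ)\})\Rightarrow H_{p+q}(\Gamma;\QQ)$ tractable; the finite-dimensionality of $E^2_{0,1}$ and the vanishing off the first quadrant then force $E^\infty_{2,0}$ to stay infinite-dimensional.

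Second, the passage from a ``localized $2$-cocycle near $A_n$'' to a genuine class in $H^2(\Gamma;\QQ)$ (equivalently, in $H^2(\Gamma\backslash Y;\QQ)$) is the technical heart of the proof, and your appeal to obstructions living in finite-dimensional groups like $H^*(\mathbf{SL_3}(\ZZ);\QQ)$ does not address it; there is no reason the extension obstruction lands there. The paper's construction is quite specific: pass to the quotient $U_n\backslash X$ by the congruence subgroup $U_n = U\cap\mathbf{SL_3}(\QQ[t],(t^{n+1}))$ of the upper-triangular group $U$, observe that the descending link of a well-chosen vertex $z_n$ becomes the complete bipartite graph $\QQ\star\QQ$, define $\varphi_n(\eta_{(q,r)})=qr$ as a $U$-invariant local cocycle, and then globalize by \emph{averaging} over $\Gamma/U_\Gamma$-cosets to obtain $\Phi_n$ on $\Gamma\backslash Y$. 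The well-definedness and cocycle property rest precisely on the $U$-invariance of $\varphi_n$ and the uniqueness of filling disks in $X$, not on a dimension count. Your height-triangularity idea for the pairing $\langle[\varphi_n],[Z_m]\rangle$ is exactly right and matches Lemma~\ref{lemma:cycle}, but without the $U_n$-quotient/averaging mechanism and without a proxy complex $Y$ in which to realize both the cycles and the cocycles, the proposal does not close.
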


We will let $\Gamma = \mathbf{SL_3}(\ZZ[t])$ and $G = \mathbf{SL_3}(\mathbb{Q}((t^{-1})))$. 

First, we will use ideas from Bux-K\"ohl-Witzel \cite{BuxKohlWitzel2013} to define an $\mathbf{SL_3}(\mathbb{Q}[t])$-invariant piecewise linear Morse function on the Euclidean building for $\mathbf{SL_3}(\mathbb{Q}((t^{-1})))$.  Then we will construct a $2$-connected $\Gamma$-complex $Y,$ which will be built from a connected subset of the Euclidean building by gluing cells as freely as possible until we arrive at a $2$-connected complex. We will show that $H^2(\Gamma \backslash Y; \QQ)$ is infinite dimensional by constructing infinite linearly independent families of $2$-cocycles and $2$-cycles that pair nontrivially. Finally, we will use the equivariant homology spectral sequence with $$E^2_{p, q} = H_p(\Gamma \backslash Y; \{H_q(\Gamma_\sigma; \QQ)\}) \Rightarrow H_{p+q}(\Gamma; \QQ)$$ to show that the infinite dimension of $H^2(\Gamma \backslash Y; \QQ)$ implies that $H^2(\Gamma; \QQ)$ is infinite dimensional.

The authors wish to thank their Ph.D. advisor, Kevin Wortman, for his valuable insights and detailed explanations of his results. Thanks also to Sarah Cobb for helpful conversations.

\section{Preliminaries}
Let $X$ be the Euclidean building for $\mathbf{SL_3}(\mathbb{Q}((t^{-1})))$. $X$ is a $2$-dimensional simplicial complex, with vertices corresponding to the homothety classes of 3-dimensional $\mathbb{Q}[[t^{-1}]]$-lattices (two lattices are in the same homothety class if one is a nonzero scalar multiple of the other) in $\mathbb{Q}((t^{-1}))^3$. 
A basis $\{v_1, v_2, v_3\}$ for $\mathbb{Q}((t^{-1}))^3$ gives rise to the $\mathbb{Q}[[t^{-1}]]$-lattice $$v_1\mathbb{Q}[[t^{-1}]] \oplus v_2 \mathbb{Q}[[t^{-1}]] \oplus v_3\mathbb{Q}[[t^{-1}]]$$
We will let $v_1\oplus v_2 \oplus v_3$ denote the lattice above.
 
Note that $\mathbf{SL_3}(\mathbb{Q}((t^{-1})))$ acts linearly on the vector space $\mathbb{Q}((t^{-1}))^3$, and therefore on $\mathbb{Q}[[t^{-1}]]$-lattices, and this gives an $\mathbf{SL_3}(\mathbb{Q}((t^{-1})))$-action on the vertices of $X$. Let $x_0$ represent the vertex corresponding to the equivalence class of the $\mathbb{Q}[[t^{-1}]]$-lattice generated by the standard basis, $e_1=(1, 0, 0 ), e_2=(0, 1, 0),$ and $e_3=(0, 0, 1)$. The $\mathbf{SL_3}(\mathbb{Q}((t^{-1})))$-stabilizer of $x_0$ is $\mathbf{SL_3}(\mathbb{Q}[[t^{-1}]])$. Let $\mathcal{A}_0$ represent the apartment of $X$ which is stabilized by the diagonal subgroup of $\mathbf{SL}_3(\mathbb{Q}((t^{-1})))$, and let $\mathcal{C}_0$ represent the chamber in $\mathcal{A}_0$ which contains $x_0$ and is stabilized by the subgroup of upper-triangular matrices in $\mathbf{SL_3}(\mathbb{Q}[[t^{-1}]])$. 
We will refer to $\mathcal{A}_0$ as the \emph{standard apartment}, $\mathcal{C}_0$ as the \emph{standard chamber}, and $x_0$ as the \emph{standard vertex}. 

The subgroup of permutation matrices (matrices with exactly one entry of $\pm1$ in each row and column, and all other entries 0) acts transitively on the 6 chambers in $\mathcal{A}_0$ which contain $x_0$. There are 6 sectors in $\mathcal{A}_0$ based at $x_0$, separated by the three walls in $\mathcal{A}_0$ which pass through $x_0$, and the permutation subgroup acts transitively on these sectors.  Let $\mathcal{S}_0$ be the sector which contains the standard chamber $\mathcal{C}_0$. 
$\mathcal{S}_0$ is a strict fundamental domain for the action of $\mathbf{SL_3}(\mathbb{Q}[t])$ on $X$ \cite{Soule1977}.  

Let $X_\Gamma = \Gamma \mathcal{S}_0$, and observe that $\mathcal{A}_0 \subset X_\Gamma$ because $\Gamma$ contains the permutation matrices in $\mathbf{SL_3}(\mathbb{Z})$ which act transitively on the sectors of $\mathcal{A}_0$ based at $x_0$.

\subsection{Cell Stabilizers}
In this section, we will discuss the $\Gamma$-stabilizers of cells in $\mathcal{S}_0$. For simplicity, we will let $\Gamma_\sigma = Stab_\Gamma(\sigma)$ and $G_\sigma = Stab_G(\sigma)$ for a cell $\sigma \subset X$.  (Recall that $G = \mathbf{SL_3}(\QQ((t^{-1})))$ and $\Gamma = \mathbf{SL_3}(\ZZ[t])$.)

\begin{lemma}\label{lemma:stabforms}   If $x$ is a vertex in $\mathcal{S}_0$, then $\Gamma_x$ has one of the following forms, where $u, v, w \in \mathbb{Z}[t]$, $a, b, c, d \in \mathbb{Z}$ such that $|ad-bc| = 1$, and $k$ and $m$ are nonnegative integers which depend on $x$.\begin{enumerate}
\item If $x_0$ is the standard vertex of $X$, then $\Gamma_{x_0} = \mathbf{SL_3}(\mathbb{Z})$.
\item If $x$ is a vertex in the interior of $\mathcal{S}_0$, then
$$\Gamma_x = \left\{\left(\begin{array}{ccc}\pm 1 & u & w \\0 & \pm 1 & v \\0 & 0 & \pm 1\end{array}\right)\middle| deg(u)\leq k, deg(v) \leq m, deg(w)\leq m+k\right\}$$

\item If $x$ is a vertex in $\partial \mathcal{S}_0$, and $x \neq x_0$, then $\Gamma_x$ has one of the following forms:

$$\Gamma_x = \left\{\left(\begin{array}{ccc}a & b & w \\c & d & v \\0 & 0 & \pm 1\end{array}\right)\middle|  deg(w), deg(v) \leq k \right\}$$

$$\Gamma_x = \left\{\left(\begin{array}{ccc}\pm1 & u & w \\0 & a & b \\0 & c & d\end{array}\right)\middle|  deg(u), deg(w) \leq k \right\}$$

\end{enumerate}

\end{lemma}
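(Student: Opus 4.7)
The plan is to realize each vertex of $\mathcal{S}_0$ explicitly as a diagonal lattice class and then read off $\Gamma_x$ by a direct matrix calculation. Since $\mathcal{S}_0 \subset \mathcal{A}_0$, every vertex of $\mathcal{S}_0$ is represented by a lattice $L_x = t^{\alpha_1} e_1 \oplus t^{\alpha_2} e_2 \oplus t^{\alpha_3} e_3$, and the ordering that selects the sector containing $\mathcal{C}_0$ (whose stabilizer is upper triangular) is $\alpha_1 \geq \alpha_2 \geq \alpha_3$. Normalizing by homothety to $\alpha_3 = 0$ and writing $\alpha_1 = k+m$, $\alpha_2 = m$ with $k, m \geq 0$ integers, the standard vertex corresponds to $k = m = 0$, interior vertices to $k, m > 0$, and the two boundary walls to $k = 0$ and $m = 0$ respectively.

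Next I would compute the full $G$-stabilizer of $L_x$ using the $t^{-1}$-adic valuation $v$. A short calculation shows that for $A = (a_{ij}) \in \mathbf{SL_3}(\mathbb{Q}((t^{-1})))$, the condition $A L_x = L_x$ is equivalent to $v(a_{ij}) \leq \alpha_i - \alpha_j$ for every pair $i, j$. One should first observe that in $\mathbf{SL_3}$ the stabilizer of the homothety class $[L_x]$ coincides with the stabilizer of the lattice itself: if $A L_x = \lambda L_x$ for some scalar $\lambda$, then the change-of-basis matrix $\lambda^{-1} A$ lies in $\mathbf{GL_3}(\mathbb{Q}[[t^{-1}]])$ and so has unit determinant, forcing $\lambda \in \mathbb{Q}[[t^{-1}]]^\ast$ and hence $\lambda L_x = L_x$. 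Intersecting with $\Gamma = \mathbf{SL_3}(\mathbb{Z}[t])$ converts the valuation inequality into the degree bound $\deg a_{ij} \leq \alpha_i - \alpha_j$; in particular $a_{ij} = 0$ whenever $\alpha_i < \alpha_j$, and $a_{ii} \in \mathbb{Z}$.

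Cases (1)--(3) then drop out directly. At $x_0$ every degree bound is $\leq 0$, so $\Gamma_{x_0} = \mathbf{SL_3}(\mathbb{Z})$. At an interior vertex the strict inequalities $\alpha_1 > \alpha_2 > \alpha_3$ kill every subdiagonal entry, leaving $\pm 1$'s on the diagonal and polynomials above with the stated degree bounds $k$, $m$, $k+m$. At a boundary vertex one equality $\alpha_i = \alpha_{i+1}$ holds, which unfreezes the corresponding subdiagonal entry and turns the associated $2\times 2$ block into an arbitrary integer matrix; combined with the remaining $\pm 1$ diagonal entry, the overall determinant condition forces $|ad-bc| = 1$, producing the two displayed forms depending on whether $k = 0$ or $m = 0$. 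The entire argument is essentially bookkeeping; the one point to watch is fixing the sign convention on $\mathcal{S}_0$ correctly so that the surviving polynomial entries land above the diagonal, matching the fact that $\mathcal{C}_0$ is stabilized by upper-triangular matrices rather than lower-triangular ones.
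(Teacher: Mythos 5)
Your proposal is correct and follows essentially the same route as the paper: identify each vertex of $\mathcal{S}_0$ with a diagonal lattice $t^{i}e_1\oplus t^{j}e_2\oplus e_3$, conjugate $\mathbf{SL_3}(\mathbb{Q}[[t^{-1}]])$ by the corresponding diagonal matrix to get entrywise degree bounds $\deg a_{ij}\le \alpha_i-\alpha_j$, intersect with $\Gamma$, and split into cases according to which inequalities among the exponents are strict. Your extra observation that the stabilizer of the homothety class equals the stabilizer of the lattice (via the determinant/unit argument) is a point the paper leaves implicit, but it does not change the substance of the argument.
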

\begin{proof}
First, observe that $G_{x_0} = \mathbf{SL_3}(\mathbb{Q}[[t^{-1}]])$, and therefore $$\Gamma_{x_0} = G_{x_0} \cap \Gamma = \mathbf{SL_3}(\mathbb{Z})$$
Any vertex $x$ in $\mathcal{S}_0$ corresponds to a $\mathbb{Q}[[t^{-1}]]$-lattice of the form 
$$t^i e_1 \oplus t^je_2 \oplus e_3$$ 
for nonnegative integers $j \leq i$, where $\{e_1, e_2, e_3\}$ is the standard basis for $\mathbb{Q}((t^{-1}))^3$. Any vertex in $\partial \mathcal{S}_0$ corresponds to a lattice with either $j = 0$ or $i = j$. Letting $$g =\left(\begin{array}{ccc}t^i & 0 & 0 \\0 & t^j & 0 \\0 & 0 & 1\end{array}\right)$$ we have $$g (e_1 \oplus e_2 \oplus e_3) = t^i e_1 \oplus t^j e_2 \oplus e_3$$
Therefore, $\Gamma_x = (g \mathbf{SL_3}(\mathbb{Q}[[t^{-1}]])g^{-1})\cap \Gamma$.  
Computing $gAg^{-1}$ for an arbitrary matrix $A \in \mathbf{SL_3}(\mathbb{Q}[[t^{-1}]])$ gives
$$gAg^{-1}=g\left(\begin{array}{ccc}a_{11} & a_{12} & a_{13} \\a_{21} & a_{22} & a_{23} \\a_{31} & a_{32} & a_{33}\end{array}\right)g^{-1} = \left(\begin{array}{rrr}a_{11} & t^{i-j}a_{1 2} & t^ia_{13} \\t^{j-i}a_{21} & a_{22} & t^ja_{23} \\t^{-i}a_{31} & t^{-j}a_{32} & a_{33}\end{array}\right)$$
where $a_{ij} \in \mathbb{Q}[[t^{-1}]]$. 
If $gAg^{-1} \in \Gamma$, then we obtain the following form for $gAg^{-1}$:
$$\left(\begin{array}{lll}deg = 0 & deg \leq (i-j) & deg \leq i \\deg \leq j-i & deg = 0 & deg \leq j \\deg \leq -i & deg \leq -j & deg =0\end{array}\right)$$
If $x$ is in the interior of $\mathcal{S}_0$, then $i> j > 0$ and we take $k = i-j$ and $m = j$.

If $x$ is in the boundary of $\mathcal{S}_0$, then either $j = 0$ or $i = j$. If $j = i = 0$, then $x = x_0$, so we may assume $i\neq 0$ . In either case ($j=0$ or $i = j$) we take $k = i$.  Depending on whether or not $j=0$, we obtain one of the two forms for $\Gamma_x$ stated in the lemma.


\end{proof}

\begin{lemma} For $\sigma$ a subcell of $\mathcal{C}_0$, $\Gamma_\sigma$ is of type $F_1$.
\end{lemma}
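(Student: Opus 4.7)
The plan is to enumerate the seven subcells of $\mathcal{C}_0$---the chamber itself, its three edges, and its three vertices---and verify that each $\Gamma_\sigma$ is finitely generated by explicit computation, using Lemma \ref{lemma:stabforms} together with the fact that a cell's stabilizer is the intersection of its vertex stabilizers.

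First I would identify the three vertices of $\mathcal{C}_0$: the standard vertex $x_0$, and the two boundary vertices corresponding to the lattices $te_1 \oplus e_2 \oplus e_3$ (with $i=1$, $j=0$) and $te_1 \oplus te_2 \oplus e_3$ ($i=j=1$), both having $k=1$ in the language of Lemma \ref{lemma:stabforms}. The three vertex stabilizers are then immediate from the lemma: $\Gamma_{x_0} = \mathbf{SL_3}(\mathbb{Z})$, while the two boundary stabilizers are each an extension of a free abelian group of rank four (the polynomial off-diagonal entries, each lying in $\mathbb{Z}[t]_{\deg \leq 1} \cong \mathbb{Z}^2$) by a $\mathbf{GL_2}(\mathbb{Z})$-type Levi factor coupled to a single $\pm 1$. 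Since $\mathbf{SL_3}(\mathbb{Z})$ and $\mathbf{GL_2}(\mathbb{Z})$ are classically finitely generated and an extension of finitely generated groups is finitely generated, all three vertex stabilizers are finitely generated.

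Next, I would compute each remaining $\Gamma_\sigma$ by intersecting the relevant vertex stabilizers. The two edges incident to $x_0$ give standard maximal parabolic subgroups of $\mathbf{SL_3}(\mathbb{Z})$ (one of signature $2{+}1$ and one of signature $1{+}2$), which are finitely generated. The remaining edge $[te_1 \oplus e_2 \oplus e_3,\ te_1 \oplus te_2 \oplus e_3]$ has stabilizer consisting of upper triangular matrices with $\pm 1$ on the diagonal, $(1,2)$ and $(2,3)$ entries in $\mathbb{Z}$, and $(1,3)$ entry in $\mathbb{Z}[t]_{\deg \leq 1}$---a finitely generated virtually nilpotent group. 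Intersecting once more with $\Gamma_{x_0}$ gives $\Gamma_{\mathcal{C}_0}$, the upper triangular integer matrices in $\mathbf{SL_3}(\mathbb{Z})$ with $\pm 1$ on the diagonal, again finitely generated.

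The only real work is the bookkeeping involved in tracking how the degree bounds and triangularity conditions interact under intersection, but since every subcell of $\mathcal{C}_0$ corresponds to parameters $k, m \leq 1$, all polynomial entries lie in a finite-rank $\mathbb{Z}$-module, and no essential obstacle arises.
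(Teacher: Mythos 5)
Your proof is correct and follows essentially the same route as the paper: a case analysis over the vertices, edges, and the chamber of $\mathcal{C}_0$, reading off vertex stabilizers from Lemma \ref{lemma:stabforms} and obtaining the higher-cell stabilizers as intersections (yielding the maximal parabolics of $\mathbf{SL_3}(\mathbb{Z})$ for the edges through $x_0$, and the virtually nilpotent upper-triangular groups for the remaining edge and for $\mathcal{C}_0$ itself). Your version is a bit more explicit about the lattice coordinates and the extension structure of the boundary-vertex stabilizers, but there is no substantive difference.
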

A much stronger result is proved in \cite{BuxMohammadiWortman2010}, where it is shown that if $\sigma$ is any cell in $X$, then $\Gamma_\sigma$ is of type $F_\infty$.  However, we only make use of the specific case above, and provide a short proof here:
\begin{proof}
First, recall that a group is type $F_1$ if and only if it is finitely generated. First, suppose $\sigma$ is a $0$-cell. It is easy to see that $\Gamma_\sigma$ is finitely generated by Lemma \ref{lemma:stabforms}.

Let $e_{ij}(a)$ represent the elementary matrix with $a$ in the $ij^\text{th}$ entry, 1's on the diagonal and 0's elsewhere. 

Suppose $\sigma$ is a $1$-cell in $\mathcal{C}_0$.  If $\sigma$ contains $x_0$, then $\Gamma_\sigma$ is a maximal parabolic subgroup of $\mathbf{SL}_3(\mathbb{Z})$ and is therefore finitely generated.
If $\sigma$ does not contain $x_0$, then $\Gamma_\sigma$ is upper-triangular and generated by $e_{12}(1), e_{23}(1), e_{13}(1), e_{13}(t)$, and the finite diagonal subgroup of $\mathbf{SL}_3(\mathbb{Z})$.

Finally, suppose $\sigma = \mathcal{C}_0$. In this case $\Gamma_\sigma$ is the upper-triangular subgroup of $\mathbf{SL_3}(\mathbb{Z})$ and it is easy to see that this group is finitely generated.

\end{proof}

\subsection{Morse Function}
If $Z$ is a CW-complex, let $Z^{(i)}$ denote the $i$-skeleton of $Z$.  

A function $h:X\to\mathbb{R}$ is a \emph{piecewise linear Morse function} (or \emph{Morse function}) if $h$ restricts to an affine (height) function on every simplex, $h(X^{(0)})$ is discrete, and $h$ is not constant on any simplex of dimension at least 1. Our goal in this section will be to define a $\Gamma$-invariant Morse function on $X_\Gamma$, and an $\mathbf{SL_3}(\QQ[t])$-invariant Morse function on $X$. Since the standard sector, $\mathcal{S}_0$, is a strict fundamental domain for $\Gamma$ acting on $X_\Gamma$ (respectively, for $\mathbf{SL_3}(\mathbb{Q}[t])$ acting on $X$), any Morse function on $\mathcal{S}_0$ can be extended to a $\Gamma$-invariant Morse function on $X_\Gamma$ (respectively, to an $\mathbf{SL_3}(\mathbb{Q}[t])$-invariant Morse function on $X$).

The Morse function we define on $X$ is essentially the same one defined by Bux-K\"ohl-Witzel \cite{BuxKohlWitzel2013}.  We will make this statement more precise in Remark \ref{rmk:BKW}.

Define a function $\hat h$ on $\mathcal{S}_0 ^{(0)}$ by $\hat h(x) = d(x_0, x)$, where $d$ is the Euclidean metric on $\mathcal{A}_0$. A first attempt at extending $\hat h$ to $\mathcal{S}_0$ would be to extend using barycentric coordinates on each simplex.  However, there is a sequence of edges in the middle of the sector which are flat with respect to this extension. 
\begin{figure}[h]
	\centering
	\includegraphics[height=2.5in]{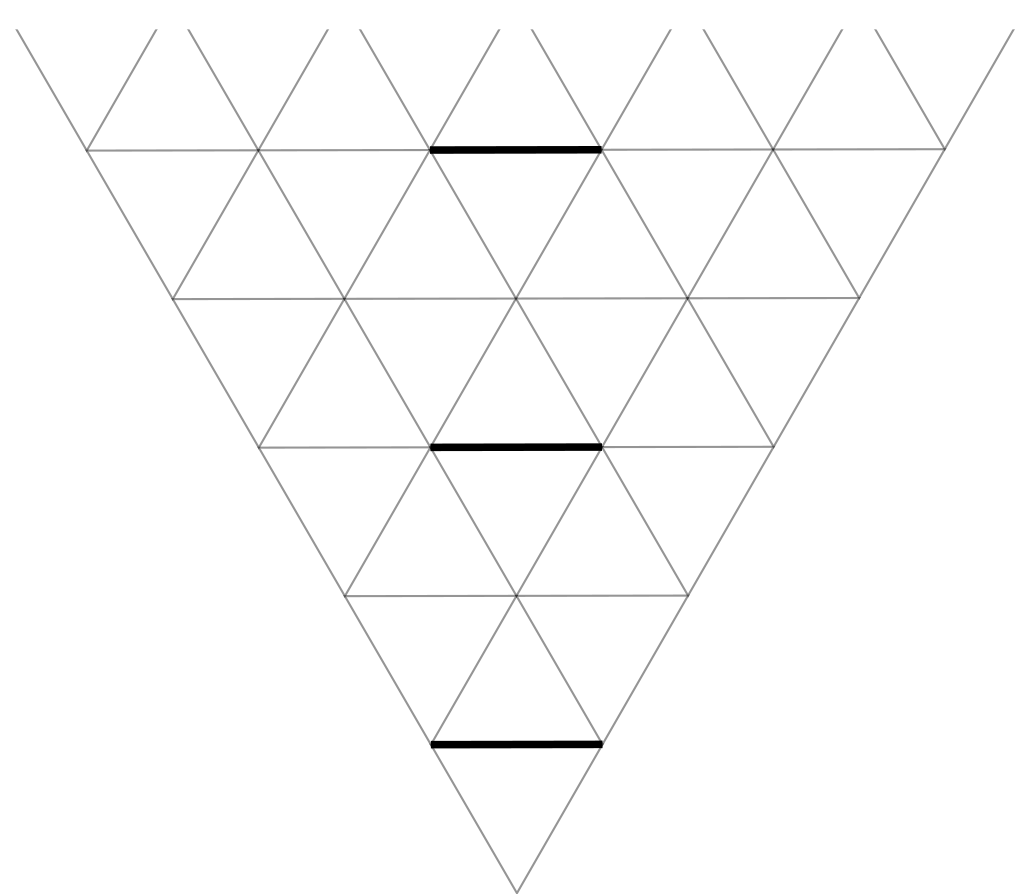}
	\caption{The sector $\mathcal{S}_0$, with the edges which are flat under $\hat h$ highlighted.}
\end{figure}
We denote this sequence by $\{\eta_n\}_{n \in \mathbb{N}}$.  Specifically, $\eta_n$ is the edge spanned by the vertices 
$t^{2n+1}e_1 \oplus t^{n} e_2 \oplus e_3$ and $t^{2n+1} e_2 \oplus t^{n+1}e_2 \oplus e_3$.

For each $n$, $\eta_n$ is contained in two chambers of $\mathcal{S}_0$. Let $\mathcal{C}_n^\uparrow$ be the chamber in $\mathcal{S}_0$ which is above $\eta_n$ (more precisely, the chamber with $\hat h(v) > \hat h(\eta_n^{(0)})$ for the vertex $v$ which is not in $\eta_n$), and $\mathcal{C}_n^\downarrow$ the chamber below $\eta_n$. Let $\mathring X$ denote the barycentric subdivision of $X$, and similarly let $\mathring{\sigma}$ denote the barycenter of a cell $\sigma \subset X$. We will extend $\hat h$ to cells in $\mathring{\mathcal{S}}_0^{(0)}$ which do not intersect $\{\eta_n\}_{n \in \mathbb{N}}$ using barycentric coordinates, then choose $\hat h(\mathring \eta_n)$ such that $$\hat h(\partial \eta_{n+1}) >  \hat h(\mathring \eta_n) > \hat h(\partial \eta_n)$$
\begin{figure}
	\centering
	\includegraphics[height=2.5in]{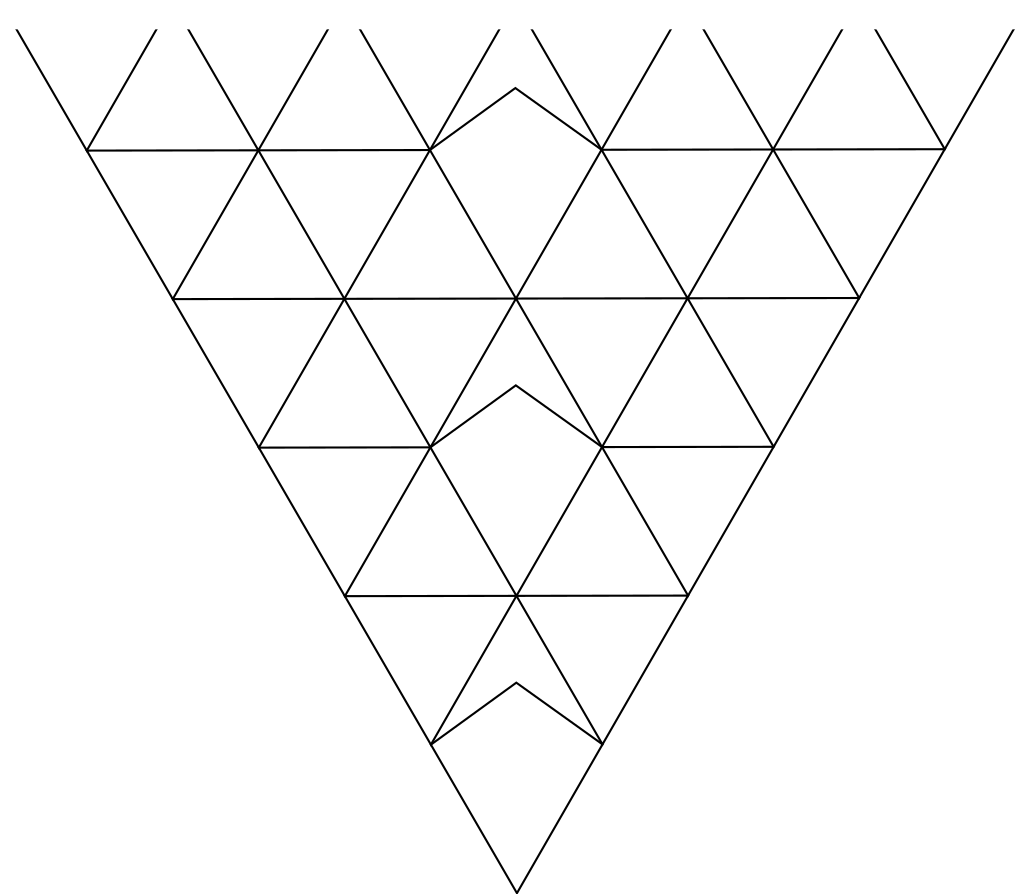}
	\caption{The sector $\mathcal{S}_0$ with $\hat{h}(\mathring{\eta}_n)$ redefined.}
\end{figure}
Finally, extend $\hat h$ to cells which intersect $\{\eta_n\}_{n\in\mathbb{N}}$ using barycentric coordinates.

Note that $\hat h$ is discrete on the vertices of $\mathring{\mathcal{S}}_0$ and bounded below by 0, so there is a function $h: \mathring{\mathcal{S}}_0^{(0)} \to \mathbb{Z}$ such that $h$ and $\hat h$ induce the same ordering on $\mathring{\mathcal{S}}_0^{(0)}$ and $h(x_0) = 0$. Extending $h$ to the $1$- and $2$-cells of $\mathring{\mathcal{S}}_0$ by using barycentric coordinates, then $\Gamma$-invariantly to $\mathring X_\Gamma,$ we obtain a $\Gamma$-invariant function on $\mathring X_\Gamma$. We may also extend $h$ to an $\mathbf{SL_3}(\mathbb{Q}[t])$-invariant function, $\bar{h}$, on $\mathring X$, because $\mathcal{S}_0$ is a strict fundamental domain for the action of $\mathbf{SL_3}(\mathbb{Q}[t])$ on $X$.

Let $y_n = \mathring \eta_n$.

\begin{lemma}
$h$ and $\bar h$ are piecewise linear Morse functions.
\end{lemma}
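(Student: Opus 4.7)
The plan is to verify the three defining properties of a piecewise linear Morse function for $h$ on $\mathring{X}_\Gamma$; the argument for $\bar h$ on $\mathring X$ is then identical with $\mathbf{SL_3}(\mathbb{Q}[t])$ replacing $\Gamma$. Because $\mathring{\mathcal{S}}_0$ is a strict fundamental domain for the simplicial action of each of these groups, the three conditions hold globally if and only if they hold on $\mathring{\mathcal{S}}_0$, so I reduce everything to the fundamental domain. Of the three conditions, that $h$ is affine on every simplex is immediate because $\hat h$ (and hence $h$) was extended to every positive-dimensional simplex by barycentric coordinates, and that $h(\mathring{\mathcal{S}}_0^{(0)})$ is discrete in $\mathbb{R}$ is immediate because $h$ takes integer values. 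Only the non-constancy condition on positive-dimensional simplices is substantive.

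An affine function on a simplex is non-constant iff its values at the endpoints of some $1$-face disagree, so this condition reduces to $h(p)\neq h(q)$ for every edge $[p,q]$ of $\mathring{\mathcal{S}}_0$, and then (since $h$ preserves the strict ordering of $\hat h$) to $\hat h(p)\neq \hat h(q)$. The edges of $\mathring{\mathcal{S}}_0$ fall into three types: $[v,\mathring e]$, $[v,\mathring F]$, and $[\mathring e,\mathring F]$, where $v$ is an original vertex, $\mathring e$ an edge barycenter, and $\mathring F$ a face barycenter of $\mathcal{S}_0$, with the evident incidences. For $[v,\mathring e]$ with $e\neq \eta_n$, the value $\hat h(\mathring e)=\tfrac12(\hat h(v_1)+\hat h(v_2))$ lies strictly between the distinct $\hat h$-values of the endpoints of the non-flat edge $e$, so it differs from $\hat h(v)$. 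For $[v,\mathring{\eta}_n]$, the defining choice $\hat h(\partial\eta_{n+1})>\hat h(\mathring{\eta}_n)>\hat h(\partial\eta_n)$ yields the inequality directly. For $[v,\mathring F]$ and for $[\mathring e,\mathring F]$ with $e$ non-flat, the formula $\hat h(\mathring F)=\tfrac13(\hat h(v_1)+\hat h(v_2)+\hat h(v_3))$ reduces the required inequality to the claim $\hat h(v_i)+\hat h(v_j)\neq 2\hat h(v_k)$ for every triangle $\{v_1,v_2,v_3\}\subset \mathcal{S}_0$. For the remaining edges $[\mathring{\eta}_n,\mathring F]$ with $\eta_n\subset F$, the free parameter $\hat h(\mathring{\eta}_n)$ can be chosen inside its permitted open interval so as to avoid the finitely many face-barycenter values it must differ from.

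The main obstacle is the arithmetic inequality $\hat h(v_i)+\hat h(v_j)\neq 2\hat h(v_k)$ for every triangle of $\mathcal{S}_0$. In the standard Euclidean coordinates on $\mathcal{A}_0$, the vertex $v_{i,j}$ corresponding to $t^i e_1\oplus t^j e_2\oplus e_3$ has squared distance $\hat h(v_{i,j})^2=\tfrac{2}{3}(i^2-ij+j^2)$ from $x_0$, and the putative relation $\sqrt a+\sqrt b=2\sqrt c$ for positive rationals $a,b,c$ is equivalent after two squarings to the Diophantine condition $(4c-a-b)^2=4ab$; a direct calculation on the two triangle shapes tiling $\mathcal{S}_0$ shows this never holds at integer lattice coordinates with $0\leq j\leq i$. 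Once $\hat h$ satisfies every edge-distinctness inequality on $\mathring{\mathcal{S}}_0$, so does $h$ by order-preservation, which completes the Morse verification for $h$; the same equivariance argument applied with $\mathbf{SL_3}(\mathbb{Q}[t])$ in place of $\Gamma$ gives it for $\bar h$.
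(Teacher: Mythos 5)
Your proposal is correct in outline but takes a genuinely different, and heavier, route than the paper. The paper disposes of non-constancy by observing that an affine extension is constant on a cell iff it is constant on the cell's vertices, and then reduces everything to the $1$-cells of the \emph{original} sector $\mathcal{S}_0$: these are either non-flat for $\hat h$ (the only flat edges of the distance function on $\mathcal{S}_0$ are the $\eta_n$) or are the two halves of some $\eta_n$, which are non-flat by the choice of $\hat h(\mathring\eta_n)$. This suffices for the complex the paper actually goes on to use, namely $\mathcal{S}'_0=\mathcal{S}_0\cup\{y_n\}$, which has no face barycenters. You instead verify non-constancy on every edge of the full barycentric subdivision $\mathring{\mathcal{S}}_0$, which forces you to compare vertex values against face-barycenter values; that is where your arithmetic condition $\hat h(v_i)+\hat h(v_j)\neq 2\hat h(v_k)$ comes from. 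This is a real point: for edges of type $[v,\mathring F]$ the paper's reduction to $1$-cells of $\mathcal{S}_0$ does not literally apply, so if one insists on reading the lemma as a statement about $\mathring X_\Gamma$ and $\mathring X$ (as the text at that point suggests), your extra case is exactly the step the paper's write-up glosses over. What the paper's approach buys is brevity and no Diophantine input; what yours buys is a proof that survives on the barycentric subdivision itself.

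The one place your write-up is not yet a proof is the sentence asserting that ``a direct calculation \dots shows this never holds'': that calculation is the entire content of your hardest case, and you should carry it out. It does go through. With $f(i,j)=i^2-ij+j^2$, so $\hat h(v_{i,j})^2=\tfrac23f(i,j)$, the three choices of apex in each of the two triangle shapes reduce, after your two squarings, either to an equation whose two sides have opposite signs (e.g.\ $8f\cdot(3(i-j)+1)=-(\,\cdot\,)^2$) or to one that fails by a size comparison for $i\geq1$ and by inspection at the origin. Alternatively, a cleaner argument: $\sqrt a+\sqrt b=2\sqrt c$ forces $a,b,c$ to equal $d$ times perfect squares for a common squarefree $d$, so any two of the three vertices with distinct values satisfy $\left|\hat h(v)-\hat h(w)\right|\geq\sqrt{2d/3}\geq\sqrt{2/3}$, which is the edge length; the reverse triangle inequality then forces those two vertices to be radially aligned with $x_0$, and two edges of a single triangle can both be radial only if $x_0$ is their common vertex, where the relation fails by inspection (the triangle $\{(0,0),(1,0),(1,1)\}$ has $f$-values $0,1,1$). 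Either version should be included for the argument to be complete; you should also note explicitly that a triangle containing some $\eta_n$ is handled at once, since two equal values in the relation force all three to be equal.
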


\begin{proof}
It suffices to show that $h\restriction_{\mathring{\mathcal{S}}_0} = \bar h \restriction_{\mathring{\mathcal{S}}_0}$ is Morse, since $h$ and $\bar h$ are respectively $\Gamma$- and $\mathbf{SL_3}(\mathbb{Q}[t])$-invariant and $\mathring{\mathcal{S}}_0$ is a strict fundamental domain for the respective group actions on $\mathring X_\Gamma$ and $\mathring X$.
By construction, $h(\mathring{\mathcal{S}}^{(0)})$ is discrete in $\mathbb{R}$. Since $ h$ is defined on $1$- and $2$-simplices by using barycentric coordinates, $h$ restricts to a height function on simplices. 

Let $\sigma \in \mathring{\mathcal{S}}$ be a cell. We must show that if $h$ is constant on $\sigma$ then $\sigma$ is a vertex. By construction, $h\restriction_\sigma$ is constant if and only if $h\restriction_{\sigma^{(0)}}$ is constant. Therefore, it suffices to show that $h$ is not constant on any $1$-cells of $\mathcal{S}_0$.

Suppose $\sigma$ is a $1$-cell.  If $\sigma$ does not contain $y_n$, then $h\restriction_{\sigma}$ is not constant because $\hat h$ is not constant on any $2$-cells, or on $1$-cells which do not contain $y_n$. If $\sigma$ contains $y_n$, then $h\restriction_\sigma$ is not constant by our choice of $h(y_n)$.
\end{proof}

\begin{rmk} \label{rmk:BKW} We note that $\bar{h}$ is essentially the same as the Morse function defined in \cite{BuxKohlWitzel2013}. The proof of Bux-K\"{o}hl-Witzel requires only the input of a uniform, $\mathbf{SL_3}(\QQ((t^{-1})))$-invariant reduction datum for $X$. In the most general context of  Bux-K\"{o}hl-Witzel, this reduction datum is supplied for arithmetic groups over function fields by Harder's reduction theory. However, in the specific case of $\mathbf{SL_n}(\mathbb{F}_p[t])$, there exists a reduction theory that is more precise than Harder's. Namely, the action of $\mathbf{SL_n}(\mathbb{F}_p[t])$ on its Euclidean building admits a strict fundamental domain for its action on its Euclidean building, and this fundamental domain is exactly a sector. A proof of this last statement is given by Soul\'{e}  in \cite{Soule1977}. Notice that in the result of Soul\'{e}, that the fields of coefficients for the polynomial rings are arbitrary, and thus the same statement applies equally as well to $\mathbf{SL_3}(\QQ((t^{-1})))$, thus supplying a uniform, $\mathbf{SL_3}(\QQ((t^{-1})))$-invariant reduction datum for $X$, and now the proof of Bux-K\"{o}hl-Witzel applies without modification.
\end{rmk}

\begin{lemma}\label{lemma:stabyn}
If $y_n$ is defined as above, then
$$\Gamma_{y_n}= \left\{\left(\begin{array}{ccc}\pm1 & u & w \\0 & \pm1 & v \\0 & 0 & \pm1\end{array}\right)\middle| deg(u), deg(v) \leq n, deg(w)\leq 2n+1\right\}$$
\end{lemma}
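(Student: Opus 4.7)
The plan is to compute $\Gamma_{y_n}$ by reducing it to the stabilizer of the edge $\eta_n$ and then invoking Lemma~\ref{lemma:stabforms}. Since $y_n = \mathring{\eta}_n$ is the midpoint of the edge $\eta_n$, an element $g \in \Gamma$ fixes $y_n$ if and only if it either fixes both endpoints of $\eta_n$ or swaps them. I would first rule out swapping: both endpoints lie in $\mathcal{S}_0$, which is a strict fundamental domain for the action of $\mathbf{SL_3}(\mathbb{Q}[t])$ on $X$, and $\Gamma \leq \mathbf{SL_3}(\mathbb{Q}[t])$, so distinct vertices of $\mathcal{S}_0$ lie in distinct $\Gamma$-orbits. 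Hence $\Gamma_{y_n} = \Gamma_{v_1} \cap \Gamma_{v_2}$, where $v_1 = t^{2n+1}e_1 \oplus t^n e_2 \oplus e_3$ and $v_2 = t^{2n+1}e_1 \oplus t^{n+1}e_2 \oplus e_3$.

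Next I would apply Lemma~\ref{lemma:stabforms} to each endpoint. For $n \geq 1$, both $v_1$ and $v_2$ sit in the interior of $\mathcal{S}_0$, with parameters $(k,m) = (n+1,n)$ and $(k,m) = (n,n+1)$ respectively. Intersecting the two resulting upper-triangular $\pm 1$-diagonal groups amounts to taking the minimum of the two degree bounds on each off-diagonal entry: $\deg(u) \leq \min(n+1,n) = n$, $\deg(v) \leq \min(n,n+1) = n$, and $\deg(w) \leq 2n+1$, which is exactly the claimed form.

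The only case requiring genuine care is $n = 0$, where $v_1$ has $j = 0$ and $v_2$ has $i = j$, so both lie on $\partial \mathcal{S}_0$ and their stabilizers are given by the two larger boundary forms in part~(3) of Lemma~\ref{lemma:stabforms}: one with an $\mathbf{SL_2}(\mathbb{Z})$ block in the upper-left $2\times 2$ corner, the other with an $\mathbf{SL_2}(\mathbb{Z})$ block in the lower-right. I would intersect them entry by entry. The vanishing of the $(2,1)$ entry forced by the second form, combined with the determinant constraint in the first form, collapses the upper-left $\mathbf{SL_2}(\mathbb{Z})$ block down to $\pm 1$'s on the diagonal, and symmetrically for the lower-right block. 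What remains is again an upper-triangular $\pm 1$-diagonal matrix with $\deg(u), \deg(v) \leq 0$ and $\deg(w) \leq 1$, matching the lemma at $n=0$.

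The main (and quite mild) obstacle is this bookkeeping in the $n=0$ boundary case; the rest of the argument is a straightforward intersection of the explicit matrix descriptions supplied by Lemma~\ref{lemma:stabforms}, together with the strict fundamental domain argument that rules out endpoint swaps.
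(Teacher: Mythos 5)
Your proof is correct and takes essentially the same route as the paper, whose entire argument is to observe that $\Gamma_{y_n} = \Gamma_x \cap \Gamma_{x'}$ and intersect the two degree bounds supplied by Lemma~\ref{lemma:stabforms}. You are in fact more careful than the paper at two points: you justify why no element can swap the endpoints of $\eta_n$ (via the strict fundamental domain), and you note that for $n=0$ both endpoints lie on $\partial\mathcal{S}_0$, so the boundary forms of Lemma~\ref{lemma:stabforms} must be intersected rather than the interior form the paper quotes uniformly --- though, as you verify, the resulting intersection is the same.
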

\begin{proof}
Recall that $y_n$ is the barycenter of the edge spanned by the vertices $x$ and $x'$, corresponding to the lattices $t^{2n+1} e_1\oplus t^ne_2\oplus e_3$ and $t^{2n+1} e_1 \oplus t^{n+1} e_2 \oplus e_3$, respectively. 

By Lemma \ref{lemma:stabforms}, 
$$\Gamma_x = \left\{\left(\begin{array}{ccc}\pm 1 & u & w \\0 & \pm 1 & v \\0 & 0 & \pm 1\end{array}\right)\middle| deg(u)\leq n+1, deg(v) \leq n, deg(w)\leq 2n+1\right\}$$
$$\Gamma_{x'} = \left\{\left(\begin{array}{ccc}\pm 1 & u & w \\0 & \pm 1 & v \\0 & 0 & \pm 1\end{array}\right)\middle| deg(u)\leq n, deg(v) \leq n+1, deg(w)\leq 2n+1\right\}$$
To complete the proof, we observe that $\Gamma_{y_n} = \Gamma_x \cap \Gamma_{x'}$.
\end{proof}

Let $\mathcal{S}'_0 = \mathcal{S}_0 \cup \{y_n\}_{n \in \mathbb{N}}$ be the standard sector modified to include the vertices $y_n$. Note that both $h$ and $\bar h$ restrict to Morse functions on $\mathcal{S}'_0$. 
Let $\mathcal{A}'_0$ be the standard sector modified to include the vertices $y_n$ and their images in each sector based at $x_0$, and let $X' = \mathbf{SL_3}(\mathbb{Q}[t]) \mathcal{S}'_0$ and $X'_\Gamma = \mathbf{SL_3}(\mathbb{Z}[t])\mathcal{S}'_0$ be the analogously modified versions of $X$ and $X_\Gamma$, respectively. Rather than using the barycentric subdivisions $\mathring X$, $\mathring X_\Gamma$, and $\mathring{\mathcal{S}}_0$, we will use $X'$, $X'_\Gamma$, and $\mathcal{S}'_0$. Note that $h$ restricts to a Morse function on $X'_\Gamma$ and $\bar h$ restricts to a Morse function on $X'$. We will abuse notation and use $h$ and $\bar h$ to denote the restricted Morse functions on $X'_\Gamma$ and $X'$.

\subsection{The descending star and descending link}

The \emph{star} of a vertex in a CW-complex is the union of all cells which contain that vertex. We denote the of a vertex $z$ in a CW-complex $Z$ by $St(z, Z)$. If $h$ is a piecewise linear Morse function on $Z$, then the \emph{descending star} of $z$, denoted $St^\downarrow(z, Z)$, is the subset of $St(z, Z)$ which consists of cells on which $h$ has a unique maximum at $z$: 
$$St^{\downarrow}(z, Z) = \left\{\text{cells }\sigma \in St(z, Z) \middle| h(v) < h(z) \text{ for every vertex } v \in \sigma-\{z\}\right\}$$

The \emph{link} of a vertex $z$ in a CW-complex $Z$ is the set of faces of cells in $St(z, Z)$ which have codimension 1 and do not contain $z$. We denote the link of $z$ in $Z$ by $Lk(z, Z)$. If $\sigma$ is a cell in $St(z, Z)$ we will use $\bar{\sigma}$ to denote the faces of $\sigma$ which are in $Lk(z, Z)$. The \emph{descending link} of $z$ is then
$$Lk^{\downarrow}(z, Z) = \left\{\text{cells }\bar{\sigma} \in Lk(z, Z) \middle| h(v) < h(z) \text{ for every vertex } v \in \bar{\sigma}\right\} = St^\downarrow(z, Z) \cap Lk(z, Z)$$

For simplicity of notation, when $Z$ is $X'$, we will suppress the simplicial complex and write $Lk^\downarrow(x)$ for $Lk^\downarrow(x, X')$ and $St^\downarrow(x)$ for $St^\downarrow(x, X')$.

\begin{lemma}\label{lemma:descendinglink}
$Lk^{\downarrow}(x)$ is connected for all $x \in X'$.
\end{lemma}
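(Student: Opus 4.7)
The strategy is to use the $\mathbf{SL}_3(\QQ[t])$-invariance of $\bar h$ together with the fact that $\mathcal{S}_0'$ is a strict fundamental domain for the $\mathbf{SL}_3(\QQ[t])$-action on $X'$ to reduce to showing that $Lk^\downarrow(x)$ is connected for every $x \in \mathcal{S}_0'$. The vertices of $\mathcal{S}_0'$ split into two families: the original simplicial vertices of $\mathcal{S}_0$, and the inserted barycenters $y_n$. I handle these separately.

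For $x = y_n$ the star admits an explicit description. The edge $\eta_n$ lies in the interior of $\mathcal{S}_0$ and is shared by exactly two chambers $\mathcal{C}_n^\uparrow$ and $\mathcal{C}_n^\downarrow$, so subdividing at $y_n$ produces four triangles incident to $y_n$, and $Lk(y_n, X')$ is the $4$-cycle
\[
a - v^\uparrow - b - v^\downarrow - a,
\]
where $\partial \eta_n = \{a,b\}$ and $v^\uparrow, v^\downarrow$ are the apexes of $\mathcal{C}_n^\uparrow, \mathcal{C}_n^\downarrow$ respectively. By the choice $h(y_n) > \hat h(\partial \eta_n)$ we have $h(a) = h(b) < h(y_n)$, and $h(v^\downarrow) < h(a) < h(y_n)$ since $v^\downarrow$ lies strictly below $\eta_n$ in $\hat h$. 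Regardless of whether $\hat h(v^\uparrow)$ exceeds $h(y_n)$, the descending link is either the path $a - v^\downarrow - b$ or the full $4$-cycle, and in either case is connected.

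For $x$ an original vertex of $\mathcal{S}_0$, the Morse function $\bar h$ agrees, up to an order-preserving reparameterization, with the Bux-K\"ohl-Witzel Morse function on $X$ away from the inserted $y_n$'s: the effect of passing from $X$ to $X'$ is only that, for each $\eta_n$ incident to $x$, the ``flat'' neighbor of $x$ across $\eta_n$ is replaced by the ascending vertex $y_n$, which is removed from the descending link. Hence $Lk^\downarrow(x, X')$ is the BKW descending link (possibly after deleting finitely many flat neighbors), which by Remark \ref{rmk:BKW} and the connectivity analysis in \cite{BuxKohlWitzel2013} is connected. For $x = x_0$ the descending link is empty.

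The main obstacle is the second case when $x$ lies on $\partial \mathcal{S}_0$: there $St(x, X')$ draws cells from several sectors of $\mathcal{A}_0$ glued together under the $\mathbf{SL}_3(\QQ[t])$-action, and one has to see that the union of the per-sector descending links is connected. This is handled by using the fact that the permutation matrices in $\Gamma$ act transitively on the sectors of $\mathcal{A}_0$ based at $x_0$, which identifies the local pictures across neighboring sectors along $\partial \mathcal{S}_0$ and allows the BKW analysis to be read off a single sector.
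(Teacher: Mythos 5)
Your proposal has a genuine gap: throughout, you analyze $St(x,X')$ and $Lk^{\downarrow}(x)$ as if they were contained in the single apartment $\mathcal{A}'_0$ (or even just in $\mathcal{S}'_0$). But $X$ is a \emph{thick} Euclidean building: each edge of $X$ lies in infinitely many chambers, one for each point of $\mathbb{P}^1$ over the residue field $\QQ$, not just the two chambers of $\mathcal{S}_0$ adjacent to it. So for $x=y_n$ the link is not the $4$-cycle $a - v^\uparrow - b - v^\downarrow - a$; since $\mathcal{S}'_0$ is a strict fundamental domain, $Lk^{\downarrow}(y_n)$ is the full $\Gamma^{\QQ}_{y_n}$-orbit of $\bar{\mathcal{C}}^\downarrow_n$, which is an infinite graph (compare Lemma \ref{lemma:bipartite}, where even after quotienting by $U_n$ the descending link of $z_n$ is the complete bipartite graph $\QQ\star\QQ$). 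The same problem affects your other two cases: the ``per-sector'' pictures you glue together for $x\in\partial\mathcal{S}_0$ only account for chambers lying in $\mathcal{A}_0$, and the permutation matrices move sectors of $\mathcal{A}_0$ around but say nothing about the infinitely many chambers of $St(x)$ outside $\mathcal{A}_0$.

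The paper's proof supplies exactly the mechanism your argument is missing. It first shows (your apartment-level analysis essentially reproduces this step) that $Lk^{\downarrow}(x)\cap\mathcal{A}'_0$ is connected and consists of one or two edges. It then reduces connectivity of the whole descending link to a group-theoretic statement: since $Lk^{\downarrow}(x)=\Gamma^{\QQ}_x\cdot\bigl(Lk^{\downarrow}(x)\cap\mathcal{A}'_0\bigr)$, it suffices to show that $\Gamma^{\QQ}_x$ is generated by elements each of which fixes at least one vertex of $Lk^{\downarrow}(x)\cap\mathcal{A}'_0$; the orbit of a connected set under a group generated by such elements is connected. This is then verified case by case ($x=y_n$, $x$ interior, $x\in\partial\mathcal{S}_0$) using the explicit stabilizers from Lemmas \ref{lemma:stabforms} and \ref{lemma:stabyn}. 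Without this step (or a genuine appeal to the Bux--K\"ohl--Witzel descending-link computation \emph{in the building}, which your sketch gestures at but does not actually invoke for the branching directions), the proposal does not establish the lemma.
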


First we will prove a simpler lemma:
\begin{lemma} $Lk^{\downarrow}(x) \cap \mathcal{A}'_0$ is connected for all $x \in \mathcal{A}'_0 - \{x_0\}$ and consists of either 1 or 2 edges of $Lk(x)$. 
\end{lemma}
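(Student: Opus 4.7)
The plan is to reduce to the fundamental sector by symmetry and then to analyze $x$ by cases.

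First, the subgroup of permutation matrices in $\mathbf{SL_3}(\mathbb{Z}) \subset \Gamma$ fixes $x_0$, preserves the pair $(\mathcal{A}'_0, h)$, and acts transitively on the six sectors of $\mathcal{A}_0$ based at $x_0$. Since $\mathcal{S}'_0$ is a strict fundamental domain for this action on $\mathcal{A}'_0$, it suffices to prove the claim for $x \in \mathcal{S}'_0 - \{x_0\}$.

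If $x = y_n$ for some $n$, then by construction $Lk(y_n) \cap \mathcal{A}'_0$ is a $4$-cycle on the vertex set $\{a, b, c^\downarrow, c^\uparrow\}$, where $a, b$ are the endpoints of $\eta_n$ and $c^\downarrow, c^\uparrow$ are the apex vertices of $\mathcal{C}_n^\downarrow, \mathcal{C}_n^\uparrow$. A short Euclidean computation gives $\hat h(c^\downarrow) < \hat h(a) = \hat h(b) < \hat h(c^\uparrow)$, and the defining inequalities $\hat h(\partial\eta_n) < \hat h(y_n) < \hat h(\partial\eta_{n+1})$ force $h(y_n)$ to lie strictly between $h(a) = h(b)$ and $h(c^\uparrow)$. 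Hence $Lk^\downarrow(y_n) \cap \mathcal{A}'_0$ is the path $a - c^\downarrow - b$, consisting of exactly $2$ edges.

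If instead $x$ is a vertex of $\mathcal{A}_0$ distinct from $x_0$, the key observation is convexity: the open sublevel set $\{y \in |\mathcal{A}_0| : \hat h(y) < \hat h(x)\}$ is a Euclidean disk centered at $x_0$ and hence convex, so its intersection with a small metric circle around $x$ is a single connected arc. Consequently the descending vertices in the hexagonal link $Lk(x, \mathcal{A}_0)$ form a cyclically consecutive subset, which yields the asserted connectedness and forces the descending link to be a path. Using $\hat h^2((i,j)) = i^2 - ij + j^2$, I would verify by direct inequality that this arc contains exactly $2$ or $3$ vertices of $Lk(x, \mathcal{A}_0)$ -- $3$ when the ray from $x_0$ to $x$ points generically into a sector, and $2$ when it points along a wall through $x_0$ or along the central radial direction of $\mathcal{S}_0$ where the $\eta_n$'s lie -- yielding respectively $1$ or $2$ edges in the descending link.

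The final step is to handle the $y_n$-subdivisions. If $x$ is an endpoint of some $\eta_n$, the edge from $x$ along $\eta_n$ is replaced in $\mathcal{A}'_0$ by an edge to $y_n$, and since $h(y_n) > h(x)$ this new neighbor is not descending; because the vertex it replaced was flat under $\hat h$ and hence not strictly descending in the first place, the count above is unchanged and the descending link remains a connected arc. The main obstacle is the careful bookkeeping at vertices which are endpoints of $\eta_n$'s in multiple adjacent sectors under the Weyl action -- for example $(1,0)$, an endpoint of both the $\eta_0$ in $\mathcal{S}_0$ and of its permutation image in an adjacent sector -- where the hexagonal link receives more than one subdivision and one must verify that each inserted $y_n$-vertex lies outside the descending arc.
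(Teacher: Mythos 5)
Your argument follows essentially the same route as the paper: split into the $y_n$ case and the $\mathcal{A}_0$-vertex case, analyze the $4$-gon $\mathcal{C}_n^\downarrow$ directly for $y_n$, and for ordinary vertices use convexity of the $\hat h$-sublevel disk (equivalently, the paper's observation that the descending cells span an angle strictly less than $\pi$, hence at most two of the six $\pi/3$-triangles can be descending and they must be adjacent). That is the paper's proof, with the ``angle $<\pi$'' fact made explicit via the convex sublevel set, and the paper likewise handles the $\eta_n$-endpoint subdivision by noting the flat neighbor is replaced by the ascending vertex $y_n$.

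Two details in your write-up are not quite right, though neither damages the core connectedness argument. First, the claim that $\hat h(\partial\eta_n) < \hat h(y_n) < \hat h(\partial\eta_{n+1})$ \emph{forces} $h(y_n) < h(c^\uparrow)$ is false: a direct computation gives $\hat h(\partial\eta_n)^2 = 3n^2+3n+1$, $\hat h(c^\uparrow)^2 = 3(n+1)^2$, and $\hat h(\partial\eta_{n+1})^2 = 3n^2+9n+7$, so $\hat h(c^\uparrow)$ lies strictly between the two stated bounds and the choice of $\hat h(y_n)$ must be further constrained to sit below it (the paper leaves this implicit too). Second, the dichotomy ``$3$ descending vertices when the ray is generic, $2$ when it lies on a wall or the central direction'' is not correct: for instance $(2,0)$ lies on the wall $j=0$ yet has descending neighbors $(1,0)$, $(2,1)$, $(1,-1)$ (three of them), while $(1,0)$ on the same wall has only one descending neighbor, namely $x_0$. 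The threshold for a neighbor to be descending is $\cos\theta > 1/(2r)$ with $r = \hat h(x)$, so the count depends on $r$ as well as direction and is not governed by a clean wall/generic split; what the convexity argument actually gives is only that the descending vertices are cyclically consecutive, which is all that is needed for connectedness.
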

\begin{proof} We may assume $x \in \mathcal{S}'_0$. When $x \neq y_n$, this lemma is a consequence of Euclidean geometry and the fact that the angle spanned by the cells of $St^{\downarrow}(x) \cap \mathcal{A}'_0$ is strictly less than $\pi$. If $St(x)$ does not contain $y_n$, then $St(x, X') = St(x, X)$. Since the chambers of $X$ are equilateral triangles, with angles measuring $\frac{\pi}{3}$, there can be at most 2 chambers in $St^{\downarrow}(x) \cap \mathcal{A}'_0$.  If there are exactly 2 chambers in $St^\downarrow(x) \cap \mathcal{A}'_0$, they must share an edge and therefore $Lk^{\downarrow}(x) \cap \mathcal{A}'_0$ is connected. If $St(x)$ contains $y_n$, then there is at most 1 chamber in $Lk^\downarrow(x) \cap \mathcal{A}'_0$.
When $x = y_n$, note that there are two cells in $\mathcal{S}'_0$ which contain $y_n$. Exactly one of these is in $St^\downarrow(y_n)$, and we will denote it by $\mathcal{C}^\downarrow_n$. 
Note that $\mathcal{C}^\downarrow_n$ is not a simplex, and so $Lk^\downarrow(y_n)\cap \mathcal{A}'_0$ consists of two edges. We will denote the union of these two edges by $\bar{\mathcal{C}}^\downarrow_n$.
\end{proof}

\begin{proof}[Proof of Lemma \ref{lemma:descendinglink}]

Let $\Gamma^\mathbb{Q} = \mathbf{SL_3}(\mathbb{Q}[t])$. It suffices to show that $\Gamma^\mathbb{Q}_x$ is generated by elements which fix at least one vertex in $Lk^{\downarrow}(x)\cap \mathcal{A}'_0$. Since the diagonal subgroup of $\Gamma^\mathbb{Q}$ acts trivially on $X$, we may ignore these generators of $\Gamma^\mathbb{Q}_x$. We may assume that $x \in \mathcal{S}'_0$.

First, assume $x = y_n$. Then $Lk^{\downarrow}(y_n)$ is $\bar{\mathcal{C}}^\downarrow_n = \mathcal{C}^\downarrow_n \cap Lk(y_n)$. By Lemma \ref{lemma:stabyn}, $\Gamma^\mathbb{Q}_{y_n}$ has the following form:
$$\left\{\left(\begin{array}{ccc}\pm1 & u & w \\0 & \pm1 & v\\0 & 0 & \pm1\end{array}\right)\middle| u,v,w \in \mathbb{Q}[t], deg(u), deg(v) \leq n, deg(w)\leq 2n+1\right\}$$

Note that $\Gamma^\mathbb{Q}_{y_n}$ is generated by diagonal matrices, which act trivially on $Lk^{\downarrow}(y_n)$, and elementary matrices $e_{12}(u), e_{23}(v),$ and $e_{13}(w)$, where $u, v, w \in \mathbb{Q}[t]$ such that $deg(u)\leq n$, $deg(v) \leq n$, or $deg(w) = 2n+1$.   Generators with $u$ or $v$ nonzero are in the stabilizer of at least one vertex adjacent to $y_n$. The two corresponding root subgroups fix families of walls in $\mathcal{A}_0$ which are parallel to the walls containing the boundary of $\mathcal{S}_0$.  Elements of these root subgroups which stabilize $y_n$ must also fix $\mathcal{C}_n^\downarrow$. Generators with $w$ nonzero stabilize the edge $\eta_n$ of which $y_n$ is the barycenter, and hence these elements stabilize the two vertices of $\bar{\mathcal{C}}_n^\downarrow$ which are adjacent to $y_n$.

Next, consider the case when $x$ is in the interior of $\mathcal{S}_0$.  Note that $x$ and $x_0$ are not in a common wall of $\mathcal{A}_0$, so there is a unique sector $\mathcal{S}_x$ of $\mathcal{A}_0$ based at $x$ which contains $x_0$. This sector intersects $Lk^\downarrow(x)$ in one edge, $\bar{\mathcal{C}}_x$. Note that $\Gamma^\mathbb{Q}_x$ is generated by diagonal matrices, and elementary matrices of the form $e_{12}(u)$ or $e_{23}(v)$, where $u, v \in \mathbb{Q}[t]$ (elementary matrices in $\Gamma^\mathbb{Q}_x$ of the form $e_{13}(w)$ are commutators of elementary matrices of the other two forms in $\Gamma^\mathbb{Q}_x$).  Each of these groups fixes a wall of $\mathcal{S}_x$, and therefore fixes at least one vertex of $\bar{\mathcal{C}}_x$.

Now suppose $x$ is in the boundary of $\mathcal{S}_0$. There is a unique sector $\mathcal{S}_x$ based at $x$ which contains $x_0$ and intersects the interior of $\mathcal{S}_0$. $\mathcal{S}_x$ intersects $Lk^\downarrow(x)$ in an edge, $\bar{\mathcal{C}}_x$. There is a second sector based at $x$, $\mathcal{S}_x'$, which contains $x_0$ but is disjoint from the interior of $\mathcal{S}_0$, and there is some $g \in \Gamma^\mathbb{Q}_x \cap \Gamma^\mathbb{Q}_{x_0}$ such that $\gamma \mathcal{S}_x = \mathcal{S}_x'$. Elements of $\Gamma^\mathbb{Q}_x$ which are also in $\Gamma^\mathbb{Q}_{x_0}$ fix the wall of $\mathcal{A}_0$ which contains $x$ and $x_0$ (and therefore, fix a vertex of $\bar{\mathcal{C}}_x$). Elements of $\Gamma^\mathbb{Q}_x$ which have 0 in the upper right corner (i.e. those $\gamma$ such that $\gamma_{13} = 0$) fix a vertex of $g\bar{\mathcal{C}}_x$. These two types of elements, along with diagonal matrices, generate $\Gamma^\mathbb{Q}_x$.

\end{proof}

\section{Construction of a $2$-connected $\Gamma$-complex}

In this section we will prove the following proposition:
\begin{proposition}\label{prop:Y} There is a $2$-connected $\Gamma$-complex $Y$ with a $\Gamma$-equivariant map $\psi:~Y\to X$ such that $\psi(Y^{(1)})$ has bounded height with respect to $h$.  Furthermore, there are only finitely many $\Gamma$-orbits in $Y$ of cells with nontrivial stabilizers, and all nontrivial cell stabilizers are of type $F_1$.\end{proposition}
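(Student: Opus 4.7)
The plan is to build $Y$ in three stages: (1) pick a connected, $\Gamma$-invariant, bounded-height subcomplex $Y_0$ of $X'_\Gamma$; (2) attach free $\Gamma$-orbits of $2$- and $3$-cells to $Y_0$ to enforce $2$-connectedness; and (3) extend the inclusion $Y_0 \hookrightarrow X$ equivariantly to the resulting complex $Y$.

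For (1), fix a large integer $N$ and let $Y_0 = X'_\Gamma \cap h^{-1}([0,N])$. Because $\mathcal{S}'_0$ is a strict fundamental domain for $\Gamma$ acting on $X'_\Gamma$ and $\mathcal{S}'_0 \cap h^{-1}([0,N])$ is a finite subcomplex, $Y_0$ has only finitely many $\Gamma$-orbits of cells. Each cell stabilizer is of type $F_\infty$ by the result of Bux-Mohammadi-Wortman \cite{BuxMohammadiWortman2010}, hence in particular of type $F_1$. Connectedness of $Y_0$ follows from a standard Morse-theoretic descent argument once one checks that at every vertex $x \in Y_0 \setminus \{x_0\}$ the descending link within $X'_\Gamma$ is nonempty; this reduces to Lemma~\ref{lemma:descendinglink} together with the observation that $\mathcal{A}_0 \subseteq X'_\Gamma$, which keeps at least one descending edge inside $X'_\Gamma$.

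For (2), choose a $\Gamma$-invariant generating set for $\pi_1(Y_0)$; for each $\Gamma$-orbit pick a representative loop $\alpha$ and attach a $\Gamma$-orbit $\{D^2_\gamma\}_{\gamma\in\Gamma}$ of $2$-cells, with $D^2_\gamma$ glued along $\gamma\cdot\alpha$. Keeping distinct cells indexed by each $\gamma$ (rather than identifying $s\cdot D^2_e$ with $D^2_e$ for $s \in \mathrm{Stab}_\Gamma(\alpha)$) forces the orbit to be free, at the harmless cost of attaching several $2$-cells with the same boundary loop. This produces a simply connected $Y_1$; repeating the procedure for $\pi_2(Y_1)$ with $3$-cells yields a $2$-connected $\Gamma$-complex $Y$. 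Since no $0$- or $1$-cells are added, $Y^{(1)} = Y_0^{(1)}$ has height at most $N$, and the cells of $Y$ with nontrivial stabilizer are exactly those of $Y_0$: finitely many orbits, each of type $F_1$.

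For (3), $X$ is contractible, so every loop bounds a disk and every $2$-sphere bounds a $3$-ball in $X$. Choose such a filling on one representative per free orbit of attached cells and spread equivariantly to obtain $\psi : Y \to X$; then $\psi(Y^{(1)}) \subseteq h^{-1}([0,N])$ has bounded height, as required. The main technical obstacle I anticipate is verifying connectedness of $Y_0$ inside $X'_\Gamma$ in step (1), since Lemma~\ref{lemma:descendinglink} is phrased for descending links in $X'$ and these could in principle shrink when one restricts to $X'_\Gamma$; steps (2) and (3) are then a routine equivariant cell-attachment and an easy extension via the contractibility of $X$.
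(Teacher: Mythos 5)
Your steps (2) and (3) match the paper's construction almost exactly: free $\Gamma$-orbits of $2$-cells and then $3$-cells attached along orbits of essential spheres, with $\psi$ extended using that $X$ is $2$-dimensional and aspherical. Your choice of a bounded-height, finitely-many-orbits $Y_0$ with type $F_1$ (indeed $F_\infty$) stabilizers is also a legitimate variant of the paper's $Y_0 = \Gamma\cdot\mathcal{C}_0$. The genuine gap is in step (1), and it is not the one you flagged. The problem is not that descending links might shrink inside $X'_\Gamma$ --- the lemma preceding Lemma \ref{lemma:descendinglink} already shows $Lk^\downarrow(x)\cap\mathcal{A}'_0$ is nonempty for $x\in\mathcal{S}'_0\setminus\{x_0\}$, and $\mathcal{A}'_0\subseteq X'_\Gamma$, so by $\Gamma$-invariance of $h$ the descending link inside $X'_\Gamma$ is nonempty at every vertex outside the orbit $\Gamma x_0$. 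The problem is that the descending link is empty on all of $\Gamma x_0$, not just at $x_0$: since $h\geq 0$ is $\Gamma$-invariant and $h(x_0)=0$, every translate $\gamma x_0$ is a local minimum, and $\Gamma x_0$ is an infinite discrete set because $\Gamma_{x_0}=\mathbf{SL_3}(\mathbb{Z})$ has infinite index in $\Gamma$. Morse-theoretic descent therefore only shows that every vertex of $Y_0$ can be joined by a descending path to \emph{some} $\gamma x_0$; it yields a surjection from $\pi_0(\Gamma x_0)$ onto $\pi_0(Y_0)$ and says nothing about whether distinct translates of $x_0$ lie in the same component. In particular, your claim that the descending link is nonempty for all $x\in Y_0\setminus\{x_0\}$ is false.

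Connecting these local minima is precisely the nontrivial input you are missing. The paper obtains it from Suslin's theorem: $\Gamma$ is generated by matrices each fixing a vertex of $\mathcal{C}_0$, which makes $\Gamma\cdot\mathcal{C}_0$ connected; the same fact is what would make your $X'_\Gamma$, and hence its sublevel sets, connected (once nonemptiness of descending links is upgraded to connectedness, to prevent components merging only higher up). The paper's remark also records the alternative you would need if you wished to avoid Suslin: attach a free $\Gamma$-orbit of $1$-cells joining the components, using Proposition \ref{boundedheight} to keep those new edges at bounded height --- note this does add $1$-cells, contrary to your assertion that $Y^{(1)}=Y_0^{(1)}$. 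Either patch completes your argument; as written, connectedness of $Y_0$ is unproven. A minor point: a $2$-sphere cannot literally ``bound a $3$-ball in $X$'' since $X$ is $2$-dimensional; what you need, and what the paper uses, is that $\pi_2(X)=0$, so $\psi$ extends over each attached $3$-cell with image equal to the image of its boundary.
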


\subsection{An $\mathbf{SL_3}(\QQ[t])$-invariant subspace of the building}

\begin{proposition} \label{boundedheight} There is $\Gamma^\QQ$-invariant, connected subcomplex $Z\subseteq X$ whose distance from a single $\Gamma^\QQ$-orbit in $X$ is bounded. 
\end{proposition}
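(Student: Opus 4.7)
The plan is to take $Z$ to be a $\Gamma^\QQ$-equivariant thickening of the orbit $\Gamma^\QQ x_0$. Fix a positive integer $N$ (to be chosen large below), let $K \subseteq X$ be the full subcomplex spanned by the vertices at distance at most $N$ from $x_0$, and set
$$Z := \Gamma^\QQ \cdot K = \bigcup_{\gamma \in \Gamma^\QQ} \gamma K.$$
By construction $Z$ is a $\Gamma^\QQ$-invariant subcomplex of $X$, and each translate $\gamma K$ is the analogous ball around $\gamma x_0$, so every point of $Z$ lies within distance $N$ of the single orbit $\Gamma^\QQ x_0$. The content of the proposition is therefore to choose $N$ large enough that $Z$ is connected.

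For connectedness I would use the standard group-theoretic reduction. Set $S_N := \{g \in \Gamma^\QQ : d(x_0,gx_0) \leq 2N\}$; for any $g \in S_N$ the translates $K$ and $gK$ meet, because by the triangle inequality in the $\mathrm{CAT}(0)$ space $X$ there is a vertex within distance $N$ of both $x_0$ and $gx_0$. Iterating along a factorization $\gamma = s_1 \cdots s_r$ with $s_i \in S_N$ then yields a chain of consecutively overlapping translates $K, s_1K, s_1s_2K, \ldots, \gamma K$, so it is enough to show that $S_N$ generates $\Gamma^\QQ$ for some $N$. For this I would exhibit the economical generating set $\Gamma^\QQ_{x_0} \cup \{e_{ij}(t) : i \neq j\}$: because $\QQ[t]$ is a Euclidean domain and $n = 3$, $\mathbf{SL_3}(\QQ[t])$ is generated by elementary matrices $e_{ij}(p)$, and the commutator identity $[e_{il}(a),e_{lj}(b)] = e_{ij}(ab)$ reduces $e_{ij}(p)$ of arbitrary degree inductively to elements of $\Gamma^\QQ_{x_0} = \mathbf{SL_3}(\QQ)$ and the six matrices $e_{ij}(t)$.

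Since every element of $\Gamma^\QQ_{x_0}$ fixes $x_0$ and so lies in $S_N$ automatically, the only substantive obstacle is to bound $d(x_0, e_{ij}(t)x_0)$ by a universal constant $C$. This is a short direct computation with rank-$3$ lattices: in each of the six cases the intersection $L_0 \cap e_{ij}(t) L_0$ and the sum $L_0 + e_{ij}(t) L_0$ are both diagonal in the standard basis and differ in the standard apartment by a single Weyl translation, so $d(x_0, e_{ij}(t) x_0)$ is a fixed constant $C$ independent of $(i,j)$. Taking $N > C/2$ then forces all six matrices $e_{ij}(t)$ into $S_N$; combined with $\Gamma^\QQ_{x_0} \subseteq S_N$, this shows $S_N$ generates $\Gamma^\QQ$, so $Z$ is connected.
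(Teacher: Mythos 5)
Your argument is correct, and it is genuinely different from what the paper does: the paper simply cites Bux--K\"ohl--Witzel (Section 10 of \cite{BuxKohlWitzel2013}) and observes that replacing $\mathbb{F}_p$ by $\mathbb{Q}$ does not affect their argument, whereas you give a self-contained construction. Your route is the standard ``Svarc--Milnor'' mechanism, run in a setting where the group is \emph{not} finitely generated (indeed $\Gamma^\QQ_{x_0} = \mathbf{SL_3}(\QQ)$ is already not finitely generated); the point you correctly exploit is that one only needs a generating set whose elements displace $x_0$ by a uniformly bounded amount, not a finite one. The generating set $\mathbf{SL_3}(\QQ) \cup \{e_{ij}(t)\}$ is verified by Gaussian elimination over the Euclidean domain $\QQ[t]$ together with the commutator identity $[e_{il}(a), e_{lj}(b)] = e_{ij}(ab)$, and this does work for $n=3$ exactly as you say. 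This buys a direct, elementary proof of the proposition that does not lean on the machinery of \cite{BuxKohlWitzel2013}; what it does \emph{not} buy is the higher connectivity that Bux--K\"ohl--Witzel establish, but the statement at hand only asks for $Z$ connected, so that is fine.

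Two small imprecisions worth fixing. First, you assert that for $g \in S_N$ ``there is a vertex within distance $N$ of both $x_0$ and $gx_0$''; the midpoint of the geodesic $[x_0, gx_0]$ is a \emph{point} at distance $\le N$ from both, not a priori a vertex, so $K \cap gK$ is nonempty only after inflating the radius by the diameter $D$ of a chamber (or equivalently after choosing $N \geq C/2 + D$). Second, the full subcomplex $K$ spanned by the vertices in $B(x_0,N)$ needs a sentence to justify its own connectedness; it is cleaner to take $K$ to be the union of all closed chambers meeting $B(x_0,N)$, which is visibly a connected, compact subcomplex, is still contained in $B(x_0, N+D)$, and makes the overlap argument immediate. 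With those cosmetic adjustments the proof is complete.
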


\begin{proof} This proposition is essentially proved by Bux-K\"{o}hl-Witzel in \cite{BuxKohlWitzel2013}. Indeed, if $\mathbb{Q}$ is replaced by $\mathbb{F}_p$ in the above proposition, then the proposition is proved in Section 10 of \cite{BuxKohlWitzel2013}, and it is the means by which it is shown that $\mathbf{SL_n}(\mathbb{F}_p[t])$ is of type $F_{n-2}$. Furthermore, replacing $\mathbb{F}_p$ by $\mathbb{Q}$ makes no changes in their proof. 
\end{proof}

\begin{proof}[Proof of Proposition \ref{prop:Y}]
Let $\mathcal{C}_0$ be the standard chamber, and let $Y_0 = \Gamma \cdot \mathcal{C}_0$. $Y_0$ is connected by Suslin's theorem, which states that $\Gamma$ is (finitely) generated by matrices which fix at least one vertex of $\mathcal{C}_0$. For any cell $\sigma \subset Y_0$, $\Gamma_\sigma$ is a conjugate of $\Gamma_{\sigma_0}$ for some subcell $\sigma_0 \subset \mathcal{C}_0$ and thus $\Gamma_\sigma$ is of type $F_1$.

If $Y_0$ is not simply connected, there is a map $f:S^1 \to Y_0$ with noncontractible image. For each $\gamma \in \Gamma$, attach a 2-cell $\Delta^2_\gamma$ to $Y_0$ by identifying the boundary of $\Delta^2_\gamma$ with $\gamma f(S^1)$. Note that $\Gamma$ acts on these new 2-cells by permuting the indices.

If the resulting space is not simply connected, repeat the above process with any remaining nontrivial $1$-spheres until the resulting space is simply connected. Call this space $Y_1$. Define a $\Gamma$-equivariant map $\psi: Y_1 \to X$ by mapping $\Delta^2_\gamma$ to the unique filling disk in $X$ of $\gamma f(S^1)$. If $\sigma$ is a cell in $Y_1 - Y_0$, then $\Gamma_\sigma= \{1\}$ by construction.

If $Y_1$ is not $2$-connected, there is a map $f:S^2 \to Y_1$ with noncontractible image.  Duplicate the process above, attaching a family of $3$-disks to $Y_1$ along the $\Gamma$ orbit of $f(S^2)$, and repeating the process if necessary until the resulting space is $2$-connected.  Call this space $Y$. Again, any cell in $Y - Y_1$ has trivial stabilizer. $X$ is $2$-dimensional and aspherical, and there are no nontrivial $2$-spheres in $\psi(Y_1)$.  Therefore, we may extend $\psi$ by mapping each $3$-disk continuously to the image of its boundary in $X$.

By construction, $Y^{(1)}$ has bounded height under $h$. Any cell in $Y$ with nontrivial stabilizer is contained in $Y_0$, and $Y_0$ is in the $\Gamma$-orbit of $\mathcal{C}_0$ and we have shown that the stabilizers of cells in this orbit are type $F_1$.  

\end{proof}

\begin{rmk}Note that the application of Suslin's theorem above (to show that $Y_0$ is connected) is not necessary, although it is convenient. If $Y_0$ were not connected, one could construct a connected complex in the following way:  let $p:[0,1]\to X$ be a path in $X$ between two components.  By Proposition \ref{boundedheight}, $p$ can be chosen so that its height under $h$ is bounded, regardless of the choice of components. For each $\gamma \in \Gamma$, attach a 1-cell $p_\gamma$ to $Y_0$ by identifying the endpoints of $p_\gamma$ with the endpoints of $\gamma(p)$. Note that $\Gamma$ acts on $\{p_\gamma\}$ by permuting the indices.
If the resulting space is not connected, repeat the process with any remaining connected components.  Call the connected space $Y_0'$, and note that there is a $\Gamma$ equivariant map $\psi:Y_0'\to X$ such that $\bar h\circ \psi(Y_0')$ is bounded.  For any cell $\sigma \subset Y_0'$, $\Gamma_\sigma = \{1\}$ if $\sigma \notin Y_0$. The above proof of the existence of the complex $Y$ can be adapted to a more general setting.
\end{rmk}

\section{Cocycles and Cycles in $\Gamma \backslash Y$}
In this section, we prove the following:

\begin{proposition}\label{prop:infdim} $H_2(\Gamma \backslash Y; \QQ)$ is infinite dimensional.
\end{proposition}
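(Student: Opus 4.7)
The plan is to construct, for each $n \in \NN$, a 2-cycle $z_n \in C_2(\Gamma \backslash Y; \QQ)$ and a 2-cocycle $c_n \in C^2(\Gamma \backslash Y; \QQ)$ satisfying $c_n(z_n) \neq 0$ and $c_n(z_m) = 0$ for $m \neq n$. Because a cocycle annihilates 2-boundaries, the evaluation pairing descends to $H_2$, so this diagonal pattern forces $\{[z_n]\}_{n \in \NN}$ to be linearly independent in $H_2(\Gamma \backslash Y; \QQ)$, giving the proposition.

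First I would record the cellular bookkeeping of $\Gamma \backslash Y$. Since $Y_0 = \Gamma \cdot \mathcal{C}_0$ is a single $\Gamma$-orbit of a chamber, $\Gamma \backslash Y_0$ has only finitely many cells in each dimension; in particular $C_1(\Gamma \backslash Y_0; \QQ)$ is finite-dimensional. The 2-cells of $\Gamma \backslash Y$ are the image of $\mathcal{C}_0$ together with one cell per $\Gamma$-orbit of loops killed in the construction of $Y_1$. Since there are infinitely many of the latter but the target of $\partial_2$ is finite-dimensional, $\ker(\partial_2)$ is automatically infinite-dimensional; the substantive content is controlling the image of $\partial_3$ coming from the added 3-cells, which is what the cocycles $c_n$ will do.

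Next I would exploit the sequence of peak vertices $y_n$. Each $y_n$ has a distinct Morse height $h(y_n)$, lies in a distinct $\Gamma$-orbit (since the $y_n$'s are distinct points of the strict fundamental domain $\mathcal{S}_0$), and has a concrete stabilizer $\Gamma_{y_n}$ from Lemma \ref{lemma:stabyn}. For the cycle $z_n$, I would choose a loop $\alpha_n$ in $Y_0$ whose filling disk in $X$ passes through or near $y_n$---so that the added 2-cell $[\Delta^2_{\alpha_n}]$ is naturally associated with the level $h(y_n)$---and then add correction terms (multiples of $[\mathcal{C}_0]$ and of lower-height added 2-cells) to cancel the boundary in the finite-dimensional space $C_1(\Gamma \backslash Y_0)$. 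Dually, $c_n$ is the cochain counting (with appropriate signs) the $\Gamma$-translates of the flat edge $\eta_n$ appearing in the $\psi$-image of a given 2-cell. Since the heights $h(y_n)$ are strictly increasing and each $c_n$ is localized at height $h(y_n)$, the pairing will have the required shape $c_n(z_m) = \delta_{nm}$.

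The main obstacle is verifying that each $c_n$ is actually a cocycle, i.e.\ that $c_n(\partial \beta) = 0$ for every 3-cell $\beta$ of $\Gamma \backslash Y$. The 3-cells of $Y$ were attached to kill 2-spheres in $Y_1$, and their $\psi$-images are 2-chains in the 2-dimensional (hence contractible) complex $X$, so any such 2-sphere must be $\psi$-degenerate. I expect the required cancellation to follow from the Morse-theoretic input of Bux--K\"ohl--Witzel: Lemma \ref{lemma:descendinglink} tells us that descending links are connected, which in turn should allow any 2-sphere in $Y_1$ to be pushed off the critical level $h(y_n)$ so that its signed intersection with the $\Gamma$-orbit of $\eta_n$ vanishes. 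Once this verification is carried out, the pairing computation is immediate and linear independence follows.
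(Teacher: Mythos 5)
Your high-level strategy (a family of $2$-cycles and a family of $2$-cocycles pairing in a triangular pattern) is the same as the paper's, but the concrete cocycle you propose does not work, and the missing ingredient is the heart of the argument. You define $c_n$ as a signed count of $\Gamma$-translates of the flat edge $\eta_n$ appearing in the $\psi$-image of a $2$-chain. Counting occurrences of a fixed edge $e$ in a $2$-chain, weighted by incidence, computes exactly the coefficient of $e$ in the boundary of that chain; it therefore vanishes on every cycle. More generally, any functional on the descending link that is ``linear'' in this sense is a coboundary of the level set and detects nothing. This is precisely the obstruction the paper is built to circumvent: it passes to the congruence quotient $U_n \backslash X$ (where $U_n = U \cap \mathbf{SL_3}(\QQ[t],(t^{n+1}))$), shows that the descending link of the vertex $z_n = t^{2n}e_1\oplus t^n e_2 \oplus e_3$ in that quotient is the complete bipartite graph $\QQ \star \QQ$, and defines the local cocycle by the \emph{quadratic} rule $\varphi_n(\eta_{(q,r)}) = qr$, which evaluates to $(q_1-q_2)(r_1-r_2)\neq 0$ on the $4$-loops generating $H_1$ of that graph while remaining $U$-invariant. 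The global cocycle $\Phi_n$ is then obtained by averaging over cosets $\Gamma/U_\Gamma$. Without some substitute for this nontrivial invariant class in $H^1$ of the descending link, your pairing is identically zero and linear independence cannot be extracted.

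Two smaller points. First, your cycles are located at the Morse critical points $y_n$ (barycenters of the flat edges), whereas the paper works at the different vertices $z_n$ and uses an explicit relative $2$-cycle $\sigma_n$ built from the commutator of $e_{12}(t^n)$ and $e_{23}(t^n)$; the explicitness matters because one must check that the cycle hits the descending link in a class on which $\varphi_n$ is nonzero (the paper computes $\varphi_n(\hat\sigma_n) = -2$). Second, your worry about verifying the cocycle condition is resolved much more cheaply than you suggest: since $X$ is $2$-dimensional with $H_2(X)=0$, the $\psi$-image of the boundary of any attached $3$-cell is the zero $2$-cycle in $X$, so no Morse-theoretic pushing off critical levels is needed; the connectivity of descending links is used elsewhere (to control $Y_0$ and the heights of filling disks), not here.
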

We will prove this proposition by defining an infinite family of independent cocycles $\{\Phi_n\}_{n \in \mathbb{N}} \subseteq H^2(\Gamma \backslash Y; \QQ)$.  Then we will exhibit an infinite family of cycles in $H_2(\Gamma \backslash Y; \QQ)$, and use the cocycles $\Phi_n$ to show that these cycles are independent.

In order to define $\Phi_n$, we will first discuss a quotient of $X$, and define a family $\varphi_n$ of local cocycles on that quotient, then use $\varphi_n$ to define the cocycles $\Phi_n$ on $\Gamma \backslash Y$. 

\subsection{Congruence Subgroups of $\mathbf{SL_3}(\mathbb{Q}[t])$}

In this subsection, we will make a brief digression to discuss congruence subgroups of $\mathbf{SL_3}(\mathbb{Q}[t])$, in order to define a local cocycle in the next section.

There is a sequence of \emph{congruence subgroups} of $\mathbf{SL_3}(\mathbb{Q}[t])$ given by 
$$\mathbf{SL_3}(\mathbb{Q}[t], (t^n)) = \ker(\mathbf{SL_3}(\mathbb{Q}[t]) \rightarrow \mathbf{SL_3}(\mathbb{Q}[t]/(t^n)))$$

Let $U$ denote the upper-triangular subgroup of $\mathbf{SL_3}(\mathbb{Q}[t])$ and let $U_{n}$ denote the upper-triangular subgroup $U \cap \mathbf{SL_3}(\mathbb{Q}[t], (t^{n+1}))$. (Note that $U_n \unlhd U$, and $U_{n}\backslash U$ can be identified with the upper-triangular subgroup of $\mathbf{SL_3}(\mathbb{Q}[t]/(t^{n+1}))$.)

Let $\pi_n: X \to U_n \backslash X$ be the quotient map. Since $U_n \unlhd U$ and $U$ acts on $X$, both $U$ and $U_n\backslash U$ act on $U_n \backslash X$. 
The Morse function $\bar{h}$ is $\mathbf{SL_3}(\mathbb{Q}[t])$-invariant, and it induces a Morse function on $U_n \backslash X$, which we will also call $\bar h$. 

 Let $z_n$ be the vertex in $X$ which corresponds to the lattice $$t^{2n} e_1 \oplus t^{n} e_2 \oplus e_3$$

\begin{lemma}\label{lemma:ustabz}
The vertex $\pi_n(z_n)$ is stabilized by $U$. 
\end{lemma}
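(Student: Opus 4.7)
The plan is to translate the statement into an explicit factorization problem: showing $U$ stabilizes $\pi_n(z_n)$ is equivalent to showing $U = U_n \cdot \mathrm{Stab}_U(z_n)$, i.e., every $u \in U$ can be written $u = vs$ with $v \in U_n$ and $s \in G_{z_n} \cap U$. Since $z_n = g x_0$ where $g = \mathrm{diag}(t^{2n}, t^n, 1)$, we have $G_{z_n} = g\,\mathbf{SL_3}(\mathbb{Q}[[t^{-1}]])\,g^{-1}$ by the same computation used in the proof of Lemma \ref{lemma:stabforms}. So the goal reduces to: given $u \in U$, produce $v \in U_n$ such that $g^{-1}v^{-1}u g \in \mathbf{SL_3}(\mathbb{Q}[[t^{-1}]])$.

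First I would set up coordinates. Write $u \in U$ with diagonal entries $a, b, c \in \mathbb{Q}^\times$ ($abc=1$) and off-diagonal entries $p, q, r \in \mathbb{Q}[t]$ in positions $(1,2), (1,3), (2,3)$. Parameterize $v \in U_n$ by its unipotent form with off-diagonal entries $t^{n+1}\alpha$, $t^{n+1}\beta$, $t^{n+1}\gamma$ in positions $(1,2), (1,3), (2,3)$, where $\alpha, \beta, \gamma \in \mathbb{Q}[t]$ are to be determined. Multiplying out $v^{-1}u$ is straightforward; conjugating by $g^{-1}$ rescales the $(1,2)$ and $(2,3)$ entries by $t^{-n}$ and the $(1,3)$ entry by $t^{-2n}$, while leaving the diagonal fixed.

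The condition that $g^{-1}v^{-1}u g$ lies in $\mathbf{SL_3}(\mathbb{Q}[[t^{-1}]])$ becomes a system of degree bounds on the off-diagonal entries of $v^{-1}u$: the $(1,2)$ and $(2,3)$ entries must have $t$-degree at most $n$, and the $(1,3)$ entry must have $t$-degree at most $2n$. I would then solve for $\alpha, \beta, \gamma$ greedily: choose $\alpha$ so that $p - t^{n+1}\alpha b$ kills all terms of $p$ of degree $>n$; choose $\gamma$ symmetrically to trim $r - t^{n+1}\gamma c$; finally, with $\alpha, \gamma$ fixed, solve for $\beta$ so that the resulting $(1,3)$ entry has degree $\leq 2n$.

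The only place one needs to check is that $\alpha, \beta, \gamma$ come out as genuine polynomials in $\mathbb{Q}[t]$, not rational functions. This is immediate because each high-degree polynomial we must cancel is divisible by $t^{n+1}$: the part of $p$ of degree $>n$ is automatically divisible by $t^{n+1}$, and likewise after combining terms, the part of the $(1,3)$ entry of degree $>2n$ is divisible by $t^{2n+1}$ and hence by $t^{n+1}$. Dividing by the nonzero scalars $a, b, c$ preserves polynomiality. This is the only potential obstacle, and it dissolves on inspection, so the construction goes through and gives the desired factorization $u = v s$ with $s \in G_{z_n}$.
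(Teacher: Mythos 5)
Your proof is correct and follows essentially the same route as the paper's: both establish the factorization $U = U_n\cdot \mathrm{Stab}_U(z_n)$ by explicitly splitting each off-diagonal entry into its part of degree $> n$ (which goes into the $U_n$ factor) and its low-degree truncation, with the same care taken for the cross term in the $(1,3)$ entry. The only difference is cosmetic — you rederive the degree bounds by conjugating by $\mathrm{diag}(t^{2n},t^n,1)$ and allow arbitrary diagonal entries, whereas the paper quotes Lemma~\ref{lemma:stabforms} and works with the unipotent part — so nothing further is needed.
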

\begin{proof}
Let $u \in U$. Then 
$$u=\left(\begin{array}{ccc}1 & p_x & p_z \\0 & 1 & p_y \\0 & 0 & 1\end{array}\right)$$ where $p_x, p_y, p_z \in \mathbb{Q}[t]$. We will write $u = u_1u_2$ where $u_1 \in U_n$ and $u_2$ stabilizes $z_n$.
Any polynomial $p \in \mathbb{Q}[t]$ can be written as a sum $p = p' + p''$ where $p' \in (t^{n+1})\mathbb{Q}[t]$ and $deg(p'')\leq n$. Write $p_x = p'_x + p''_x$ and $p_y=p'_y+p''_y$. Let $q_z = p_z -p'_xp''_y$ and write $q_z = q_z' + q''_z$.
Note that $u=u_1u_2$, where $$u_1 = \left(\begin{array}{ccc}1 & p'_x & p'_z \\0 & 1 & p'_y \\0 & 0 & 1\end{array}\right)$$ $$u_2=\left(\begin{array}{ccc}1 & p''_x & p''_z \\0 & 1 & q''_y \\0 & 0 & 1\end{array}\right)$$
Note that $u_1 \in U_n$. Since $deg(p''_x), deg(p''_y), deg(q''_z) \leq n$, $u_2$ stabilizes $z_n$ (by Lemma \ref{lemma:stabforms}).  Therefore $$u \pi_n(z_n) = u U_n z_n = U_n u z_n = U_n u_1 u_2 z_n = U_n z_n = \pi_n(z_n)$$
\end{proof}
We will abuse notation slightly and use $z_n$ to denote both the vertex in $X$, and its image $\pi_n(z_n)$ in the quotient. 
\begin{lemma} \label{lemma:bipartite}$Lk^\downarrow(z_n, U_n \backslash X)$ is a complete bipartite graph.
\end{lemma}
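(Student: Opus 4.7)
The plan is to leverage the bipartite structure of the link in an $A_2$-building. Since $Lk(z_n, X)$ is the spherical Tits building of $\mathbf{SL_3}(\mathbb{Q})$, i.e., the bipartite incidence graph of points and lines in $\mathbb{P}^2(\mathbb{Q})$, and since $U_n$ is unipotent and hence type-preserving on this spherical building, Lemma \ref{lemma:ustabz} implies that $Lk(\pi_n(z_n), U_n \backslash X)$ inherits a well-defined bipartition into $U_n$-orbits of point-vertices and $U_n$-orbits of line-vertices.

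To make the descending vertices explicit, I would first do a direct Euclidean distance computation in $\mathcal{A}_0$: exactly three of the six apartment-neighbors of $z_n$ are descending, namely the three lying between $z_n$ and $x_0$. By computing the corresponding sublattices of $L_n = t^{2n}e_1 \oplus t^n e_2 \oplus e_3$ modulo the uniformizer $t^{-1}$, I would identify two of these three neighbors as line-vertices and one as a point-vertex (or vice versa), so that $Lk^\downarrow(z_n, \mathcal{A}_0)$ is already a $K_{1,2}$. The full descending link $Lk^\downarrow(z_n, X)$ is then bipartite, consisting of all such $K_{1,2}$'s glued together over all apartments through $z_n$.

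The crucial step is to show that the descending link becomes complete bipartite in the quotient. Given any descending point-orbit $p$ and descending line-orbit $\ell$ in $Lk^\downarrow(\pi_n(z_n), U_n \backslash X)$, I would lift them to vertices of $Lk^\downarrow(z_n, X)$ and exhibit a $2$-simplex of $X$ with vertices in the $U_n$-orbits of $z_n$, $p$, and $\ell$ respectively. The mechanism is the action of the three root subgroups of $U$ (those generated by elementary matrices $e_{12}(u), e_{13}(w), e_{23}(v)$ with $u, v, w \in \mathbb{Q}[t]$); modulo $U_n$ their images generate the upper-triangular unipotent subgroup of $\mathbf{SL_3}(\mathbb{Q}[t]/(t^{n+1}))$, which acts transitively enough on the descending orbits in the quotient link to produce the required incidence.

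The main obstacle is this last step, which reduces to an explicit matrix computation in $\mathbf{SL_3}(\mathbb{Q}[t]/(t^{n+1}))$ using the description of $\Gamma^{\mathbb{Q}}_{z_n}$ from Lemma \ref{lemma:stabforms}. One must identify which elementary matrix elements move a given descending point-orbit onto a given descending line-orbit in $\mathbb{P}^2(\mathbb{Q})$, thereby realizing every $(p, \ell)$-pair as an edge in the quotient link and establishing that it is complete bipartite.
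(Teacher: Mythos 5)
Your starting point---that $Lk(z_n,X)$ is the spherical building of $\mathbf{SL_3}(\mathbb{Q})$ and that $U_n$ preserves type, so the quotient link inherits a bipartition---is fine and consistent with the paper, which instead encodes the bipartition via the labels $a$ and $b$ coming from the two commuting root subgroups. But there are two problems with the rest of the argument.

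First, the description of the descending link in $\mathcal{A}_0$ is wrong. You claim that three of the six apartment-neighbors of $z_n$ are descending and that $Lk^\downarrow(z_n,\mathcal{A}_0)$ is a $K_{1,2}$. In fact only two are. With $z_n \leftrightarrow (2n,n,0)$, the six apartment-neighbors are $(2n\pm1,n,0)$, $(2n,n\pm1,0)$, $(2n+1,n+1,0)$, and $(2n-1,n-1,0)$; computing $\sum a_i^2 - \tfrac13(\sum a_i)^2$ shows only $(2n-1,n,0)$ and $(2n-1,n-1,0)$ are strictly closer to $x_0$ than $z_n$, while $(2n,n\pm1,0)$ are strictly farther (the move is perpendicular to the segment to $x_0$, hence increases distance). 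Since $z_n$ lies on the axis of the sector, $St^\downarrow(z_n,\mathcal{A}_0)$ is a single chamber and $Lk^\downarrow(z_n,\mathcal{A}_0)$ is a single edge with one vertex of each type. This matters: the paper's argument depends on $L^\downarrow_n$ being the orbit of a \emph{single} 1-cell, which is what makes the quotient description clean.

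Second, the step you yourself flag as the ``main obstacle''---that the unipotent group ``acts transitively enough'' to produce every point-line incidence in the quotient---is exactly where the content of the lemma lies, and it is left unresolved. The paper closes this by observing that $L^\downarrow_n$ is precisely the orbit of that single edge under the root subgroups $e_{12}(at^n)$ and $e_{23}(bt^n)$ with $a,b\in\mathbb{Q}$; modulo $U_n$ these commute, $e_{12}(at^n)$ fixes one vertex of the base edge and moves the other, and symmetrically for $e_{23}(bt^n)$, so vertices on each side are labeled by $\mathbb{Q}$ and edges by ordered pairs $(a,b)$, with all pairs occurring. That is the whole proof, and it bypasses any need to invoke the incidence geometry of $\mathbb{P}^2(\mathbb{Q})$ or chase 2-simplices through the quotient. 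You would need to actually carry out this orbit computation; sketching that it ``reduces to an explicit matrix computation'' does not discharge the burden.
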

\begin{proof}
First, we observe that $Lk^\downarrow(z_n, U_n \backslash X) = U_n \backslash Lk^\downarrow(z_n, X)$.
We have previously shown that $Lk^\downarrow(z_n, X)$ is the orbit of a single 1-cell under elementary matrices $e_{12}(at^n),$ and $e_{23}(bt^n)$, where $a, b \in \mathbb{Q}$. Let $\hat e$ denote the image of this edge in $U_n\backslash X$. In $U_n \backslash U$, $e_{12}(at^n)$ and $e_{23}(bt^n)$ commute, since their commutator is in $U_n$. Suppose $u \in U_n \backslash U$ stabilizes $z_n$. Then there are elements $u_1 = e_{12}(at^n)$ and $u_2 = e_{23}(bt^n)$ such that $u \hat e =   u_1u_2\hat e$. Furthermore, $u_1$ and $u_2$ each fixes exactly one vertex of $\hat e$ and moves the vertex which the other fixes. This gives a labelling of every vertex in $Lk^\downarrow(z_n, U_n \backslash X)$ by a rational number, and every edge by an ordered pair of rational numbers. Since there are no restrictions on $a$ and $b$, all pairs of rational numbers are possible and because of the action on $X$, different ordered pairs of rational numbers give different edges. Hence $Lk^\downarrow(z_n, U_n \backslash X)$ is a complete bipartite graph.
\end{proof}

From this point on, we will let 
\begin{align*}
S^\downarrow_n &= Star^\downarrow(z_n, X)\\
\hat{S}^\downarrow_n &= Star^\downarrow(z_n, U_n\backslash X)\\
L^\downarrow_n &= Lk^\downarrow(z_n, X)\\
\hat{L}^\downarrow_n &= Star^\downarrow(z_n, U_n\backslash X)\\\end{align*}
\subsection{Local cocycles}

\begin{lemma}  There is an infinite family of (nontrivial) $U$-invariant cocycles $\varphi_n \in H^2(\hat{S}^\downarrow_n, \hat{L}^\downarrow_n; \QQ)$. 
\end{lemma}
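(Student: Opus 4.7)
The plan is to first reduce the statement to a cohomological computation on the descending link. Since $\hat{S}^\downarrow_n$ is the cone on $\hat{L}^\downarrow_n$ with apex $z_n$, it is contractible, and the long exact sequence of the pair yields a $U$-equivariant isomorphism
$$H^2(\hat{S}^\downarrow_n, \hat{L}^\downarrow_n; \QQ) \cong \tilde{H}^1(\hat{L}^\downarrow_n; \QQ).$$
By Lemma \ref{lemma:bipartite}, $\hat{L}^\downarrow_n$ is the complete bipartite graph $K_{A,B}$ with $A = B = \QQ$, the bijections $A \cong \QQ$ and $B \cong \QQ$ being the labellings produced in the proof of that lemma. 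Since the complex is $1$-dimensional, every $1$-cochain is automatically a cocycle, and a cochain $\varphi : A \times B \to \QQ$ is a coboundary exactly when it is of the form $(a,b) \mapsto g(b) - h(a)$ for some functions $g, h : \QQ \to \QQ$.

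Next I would describe the $U$-action on $\hat{L}^\downarrow_n$. By Lemma \ref{lemma:ustabz}, $U$ stabilizes $z_n$ in $U_n \backslash X$, so the action factors through $U_n \backslash U$. Using the coset representatives implicit in the proof of Lemma \ref{lemma:bipartite}, the elements $e_{12}(at^n)$ and $e_{23}(bt^n)$ act on $A$ and $B$ respectively by translation, while all other elements of $U_n \backslash U$ (those whose off-diagonal entries are of lower degree, together with diagonal matrices and the $e_{13}$ subgroup) fix each vertex of the link. Consequently the induced $U$-action on $C^1(\hat{L}^\downarrow_n; \QQ)$ factors through an action of $\QQ \oplus \QQ$ by independent translations in the two coordinates.

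For each $n$, I would then define $\varphi_n \in C^1(\hat{L}^\downarrow_n; \QQ)$ by the bilinear form $\varphi_n(a,b) = ab$. To see that its cohomology class is $U$-invariant, note that translation by $(s,t) \in \QQ \oplus \QQ$ sends $\varphi_n(a,b)$ to $(a-s)(b-t)$, so the difference is $-at - sb + st$, which has the form $g(b) - h(a)$ and is therefore a coboundary. To see that $[\varphi_n]$ is nontrivial, observe that the $1$-chain $[(0,0)] - [(1,0)] + [(1,1)] - [(0,1)]$ is a cycle in $\hat{L}^\downarrow_n$ (a $4$-cycle in the bipartite graph), and $\varphi_n$ pairs with it to give $0 - 0 + 1 - 0 = 1 \neq 0$. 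Since coboundaries pair trivially with every cycle, this forces $[\varphi_n] \neq 0$.

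The step I expect to require the most care is justifying the description of the $U$-action on $\hat{L}^\downarrow_n$ in the second paragraph: namely, unwinding Lemma \ref{lemma:bipartite} to confirm that all elements of $U_n \backslash U$ outside the two translating root subgroups act trivially on the link, so that the action on $C^1$ genuinely factors through the abelian group $\QQ \oplus \QQ$. Once this is established, the construction and verification for $\varphi_n(a,b) = ab$ is uniform in $n$ and immediately produces the desired infinite family.
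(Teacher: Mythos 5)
Your proposal is correct and follows essentially the same route as the paper: reduce to $H^1$ of the descending link, use the complete bipartite structure from Lemma \ref{lemma:bipartite} to define $\varphi_n(\eta_{(q,r)})=qr$, check $U$-invariance via the translation action of the two root subgroups (the paper checks invariance on generating $4$-loops, you check that the cochain changes by a coboundary --- these are equivalent), and certify nontriviality by pairing with an explicit cycle. The only difference is that the paper's nontriviality witness is a particular $7$-edge cycle $\hat\sigma_n$ chosen so that it lifts to the relative $2$-cycle $\sigma_n$ of the following lemma, whereas your simpler $4$-cycle suffices for this lemma in isolation.
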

\begin{proof}
Relative cycles in $H_2(\hat{S}^\downarrow_n, \hat{L}^\downarrow_n; \QQ)$ correspond to cycles in 
$H_1(\hat{L}^\downarrow_n; \QQ)$. 
By Lemma \ref{lemma:bipartite}, $\hat{L}^\downarrow_n$ is a complete bipartite graph, the vertices of each type are parametrized by $\QQ$, and the edges can be labeled by ordered pairs of rational numbers.  (In fact, $\hat{L}^\downarrow_n = \QQ \star \QQ$.) 
We fix an orientation from one family of vertices to the other family, and define a function on the edges $\{\eta_{(q, r)}\}_{ q, r \in \QQ}$ of $\hat{L}^\downarrow_n$ by taking $\varphi_n(\eta_{(q, r)}) = qr$. 

To verify that $\varphi_n$ is a cocycle, note that $\hat{L}^\downarrow_n$ is a graph, so there are no nontrivial 2-coboundaries on $\hat{L}^\downarrow_n$. 

Next, we will show that $\varphi_n$ is $U$-invariant. The loops of length 4 in $\hat{L}^\downarrow_n$ form a generating set for $H_1(\hat{L}^\downarrow_n, \mathbb{Q})$, so it suffices to check that $\varphi_n$ is $U$-invariant on loops of length 4. If $\sigma$ is a loop of length 4, then $\sigma$ has the form $$\eta_{(q_1, r_1)} - \eta_{(q_2, r_1)} + \eta_{(q_2, r_2)} - \eta_{(q_1, r_2)}$$ and $\varphi_n(\sigma) = (q_1-q_2)(r_1-r_2)$. If $u \in U$, then $u$ stabilizes $z_n$ and acts by addition of the degree $n$ coefficient of the $u_{12}$ and $u_{23}$ entries on the coordinates of the subscript, so $$\varphi_n(u\sigma) = ((q_1 + q) - (q_2 + q))((r_1+r) - (r_2 + r)) = \varphi_n(\sigma)$$

$U_{n}$ acts trivially on $U_{n}\backslash U\mathcal{S}_0$, so the value of $\varphi_n$ is invariant under the action of $U$. 

Finally, we will show that $\varphi_n$ is nontrivial by exhibiting a cycle $\hat{\sigma} _n \in H_1(\hat{L}^\downarrow_n; \QQ)$ such that $\varphi_n (\hat\sigma_n) \neq 0$.  Let $\hat\sigma_n = 2 \eta_{(0, 0)} + \eta_{(-1, 0)} + \eta_{(-1,1)} - \eta_{(0, 1)}-\eta_{(0,-1)} + \eta_{(1,-1)} - \eta_{(1,0)}$.  Using the form of $\varphi_n$ given above, we see that $\varphi_n(\hat\sigma_n) = -2$. 
\end{proof}
\begin{lemma} There is a relative $2$-cycle $\sigma_n \in H_2(S^\downarrow_n, L^\downarrow_n; \QQ)$ such that $\pi_n(\sigma_n) = \hat\sigma_n$.
\end{lemma}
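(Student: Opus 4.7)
The plan is to reduce the statement to lifting a $1$-cycle by exploiting the cone structure of the descending star. Since $\hat{S}^\downarrow_n$ is the simplicial cone on $\hat{L}^\downarrow_n$ with apex $z_n$, the long exact sequence of the pair and contractibility of the cone give a natural identification $H_2(\hat{S}^\downarrow_n, \hat{L}^\downarrow_n; \QQ) \cong \tilde H_1(\hat{L}^\downarrow_n; \QQ)$; under this identification the relative $2$-cycle $\hat\sigma_n$ is represented by the cone $z_n \ast \hat\sigma_n$ on the $1$-cycle of the preceding lemma. The analogous identification holds for $(S^\downarrow_n, L^\downarrow_n)$. Thus the problem reduces to producing a $1$-cycle $c_n$ in $L^\downarrow_n$ whose $\pi_n$-image is $\hat\sigma_n$, and then setting $\sigma_n := z_n \ast c_n$.

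From the proof of Lemma~\ref{lemma:bipartite}, $L^\downarrow_n$ is the $H$-orbit of a single edge $e_{00}$, where $H = \langle e_{12}(at^n),\; e_{23}(bt^n) : a, b \in \QQ\rangle$. The commutator relation $[e_{12}(at^n), e_{23}(bt^n)] = e_{13}(ab\,t^{2n})$ puts every element of $H$ into the normal form $e_{12}(qt^n)\,e_{23}(rt^n)\,e_{13}(ct^{2n})$ with $q,r,c \in \QQ$, and shows that $H \cap U_n = \{e_{13}(ct^{2n}) : c \in \QQ\}$. I would then verify via Lemma~\ref{lemma:stabforms} applied to the vertices adjacent to $z_n$ that each $e_{13}(ct^{2n})$ stabilizes both endpoints of $e_{00}$ pointwise in $X$. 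Granting this, $H \cap U_n$ acts trivially on $L^\downarrow_n$, so $\pi_n$ restricts to a simplicial isomorphism $L^\downarrow_n \xrightarrow{\sim} \hat{L}^\downarrow_n$, with both edge-sets parametrized by $(q,r)\in \QQ^2$.

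With the isomorphism in hand, the lift is forced: for each $\eta_{(q,r)}$ appearing in $\hat\sigma_n$, set $\tilde e_{(q,r)} := e_{12}(qt^n)\,e_{23}(rt^n)\cdot e_{00}$, and let $c_n$ be the same $\QQ$-linear combination of the $\tilde e_{(q,r)}$ as $\hat\sigma_n$ is of the $\eta_{(q,r)}$. Because $\pi_n \restriction_{L^\downarrow_n}$ is a cellular bijection, $c_n$ is a $1$-cycle in $L^\downarrow_n$. Taking $\sigma_n := z_n \ast c_n$ then lies in $H_2(S^\downarrow_n, L^\downarrow_n; \QQ)$ and satisfies $\pi_n(\sigma_n) = z_n \ast \pi_n(c_n) = z_n \ast \hat\sigma_n = \hat\sigma_n$ as required.

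The main obstacle is the verification that $e_{13}(ct^{2n})$ genuinely fixes the endpoints of $e_{00}$, rather than merely fixing them modulo $U_n$. This is a routine but slightly technical lattice computation: each endpoint is a vertex adjacent to $z_n$, and one must show that the off-diagonal degree bounds in Lemma~\ref{lemma:stabforms} place $e_{13}(ct^{2n})$ inside its stabilizer in $G$. If this point-stabilization were to fail at some endpoint, the isomorphism $L^\downarrow_n \cong \hat{L}^\downarrow_n$ would degrade to a fibration, and one would instead pick a lift in each $U_n$-fiber and exploit the freedom in $H \cap U_n$ to match endpoints along $\partial c_n$; since $\hat\sigma_n$ is a $1$-cycle, the endpoint-matching conditions assemble into a trivial obstruction, and the construction still goes through.
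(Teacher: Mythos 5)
The proposal's central claim---that $\pi_n$ restricts to a simplicial isomorphism $L^\downarrow_n \xrightarrow{\ \sim\ } \hat{L}^\downarrow_n$---is false, and this is a genuine gap rather than a deferred technicality. The ``routine lattice computation'' you flag as the main obstacle in fact fails: the two endpoints of $e_{00}=\bar{\mathcal C}_n$ are the vertices corresponding to $t^{2n-1}e_1\oplus t^n e_2 \oplus e_3$ and $t^{2n-1}e_1\oplus t^{n-1}e_2\oplus e_3$, and Lemma~\ref{lemma:stabforms} (with $i=2n-1$) forces $\deg(w)\le 2n-1$ for any $e_{13}(w)$ stabilizing either endpoint. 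Thus $e_{13}(ct^{2n})$ with $c\neq 0$ moves both endpoints of $e_{00}$, and $H\cap U_n$ does \emph{not} act trivially on $L^\downarrow_n$; the stabilizer $H_{e_{00}}$ is in fact trivial, so $\pi_n$ on edges is many-to-one. There is also an a priori obstruction you could have noticed without any lattice computation: $L^\downarrow_n$ sits inside $Lk(z_n,X)$, the incidence graph of $\mathbb{P}^2(\mathbb{Q})$, which has girth $6$, whereas $\hat{L}^\downarrow_n$ is a complete bipartite graph and hence has girth $4$. So no bijective simplicial map between them can exist, and in particular $4$-cycles in $\hat{L}^\downarrow_n$ never lift to cycles in $L^\downarrow_n$.

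Your fallback paragraph (``pick a lift in each $U_n$-fiber and exploit the freedom in $H\cap U_n$ to match endpoints'') gestures toward the right idea, but the claim that ``the endpoint-matching conditions assemble into a trivial obstruction'' is not correct for an arbitrary $1$-cycle in $\hat{L}^\downarrow_n$: the $4$-cycle $\eta_{(0,0)}-\eta_{(1,0)}+\eta_{(1,1)}-\eta_{(0,1)}$ admits no lift, precisely because of the girth mismatch above. What makes the lift exist is the \emph{specific} choice of $\hat\sigma_n$ as an $8$-edge cycle, and the paper's proof is exactly the required explicit construction: it writes $\sigma_n$ as a signed sum of eight chambers $u\mathcal{C}_n$ with $u$ ranging over $1,\ u_1^{-1},\ u_1^{-1}u_2,\ u_1^{-1}u_2u_1,\ [u_1^{-1},u_2],\ u_1u_2^{-1}u_1^{-1},\ u_1u_2^{-1},\ u_1$, and observes that the identity $[u_1^{-1},u_2]=[u_1,u_2^{-1}]$ in the Heisenberg group forces the boundary edges meeting $z_n$ to cancel, so that $\sigma_n$ is a relative cycle, and then reads off $\pi_n(\sigma_n)=\hat\sigma_n$ by tracking exponent sums of $u_1$ and $u_2$. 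Your cone reduction $H_2(S^\downarrow_n,L^\downarrow_n)\cong \tilde H_1(L^\downarrow_n)$ is fine and matches the implicit structure of the paper's argument, but you cannot get away from constructing the particular closed path in $L^\downarrow_n$; there is no isomorphism to hide behind.
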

\begin{proof}
Let $\mathcal{C}_n$ be the chamber in $St^\downarrow(z_n, \mathcal{A}_0)$, $\bar{\mathcal{C}}_n$ the corresponding edge in $Lk^\downarrow(z_n, \mathcal{A}_0)$, and $$u_1 = \left(\begin{array}{ccc}1 & t^n & 0 \\0 & 1 & 0 \\0 & 0 & 1\end{array}\right), \ \ u_2 = \left(\begin{array}{ccc}1 & 0 & 0 \\0 & 1 & t^n  \\0 & 0 & 1\end{array}\right)$$
Take$$\sigma_n = {\mathcal{C}}_n - u_1^{-1}{\mathcal{C}}_n + u_1^{-1}u_2 {\mathcal{C}}_n - u_1^{-1}u_2u_1 {\mathcal{C}}n + [u_1^{-1},u_2] {\mathcal{C}}_n - u_1u_2^{-1}u_1^{-1}{\mathcal{C}}_n +u_1u_2^{-1}{\mathcal{C}}_n -u_1{\mathcal{C}}_n$$
Since $[u_1^{-1}, u_2] = [u_1, u_2^{-1}]$, $\sigma_n$ is a cycle.  Let $\bar{\sigma}_n$ be the corresponding $1$-cycle in $H_1(L^\downarrow_n;\QQ)$. Note that $u_1$ and $u_2$ descend to nontrivial elements of $U^n$, and their images commute. For each edge $u\bar{\mathcal{C}}_n$ in $\bar{\sigma}_n$, we know that $\pi_n(u\bar{\mathcal{C}}_n) = \eta_{(a, b)}$ for some $a, b \in \mathbb{Q}$. To find $a$, count the number of times $u_1$ appears in $u$ (counting $u_1^{-1}$ as $-1$). To find $b$, count the number of times $u_2$ appears in $u$. For example, $\pi_n(u_1\bar{\mathcal{C}}_n) = \eta_{(1, 0)}$, $\pi_n(u_1u_2^{-1}) = \eta_{(1, -1)}$, and $\pi_n(\bar{\mathcal{C}}_n) = \pi_n([u_1^{-1},u_2]\bar{\mathcal{C}}_n) = \eta_{(0, 0)}$.

Therefore, $\pi_n(\sigma_n) = \hat\sigma_n$.
\end{proof}

We will use $\varphi_n$ to define a cocycle $\Phi_n \in H^2(\Gamma \backslash Y; \QQ)$ by lifting $2$-cells in $\Gamma \backslash Y$ to disks in $X$, applying the quotient map $\pi_n$ to obtain a disk in $U_{n}\backslash X$, evaluating $\varphi_n$ on the intersection with $\hat{L}^\downarrow_n$, and averaging over $\Gamma$-translates of the lifted disk.

\begin{lemma} If $D$ is a $2$-disk in $X$ with boundary in $\psi(Y_0')$, then for sufficiently large $n$, $\pi_n(D)\cap \hat{S}_n \subset \hat S^\downarrow _n$
\end{lemma}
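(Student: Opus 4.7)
The plan is to show that, for $n$ sufficiently large, the intersection $\pi_n(D) \cap \hat{S}_n$ is actually empty, so the claimed containment holds vacuously. The underlying tension is that $z_n$ escapes to infinity in $h$ as $n \to \infty$, while any fixed $2$-disk $D \subset X$ has only finitely many vertices and so bounded heights on those vertices.

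First, I would note that $D$ is compact in the $2$-dimensional simplicial complex $X$ and therefore meets only finitely many cells; in particular $D^{(0)}$ is finite, and
$$B_D := \max\{h(v) : v \in D^{(0)}\} < \infty.$$
The hypothesis $\partial D \subset \psi(Y_0')$ is actually stronger than what this lemma alone needs --- compactness of $D$ already gives a finite bound --- but it will be essential later when one wants uniformity across $\Gamma$-translates of $D$ in the construction of $\Phi_n$.

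Second, I would record that $h(z_n) \to \infty$. The vertex $z_n$ corresponds to the lattice $t^{2n}e_1 \oplus t^n e_2 \oplus e_3$, whose Euclidean distance from $x_0$ in $\mathcal{A}_0$ grows linearly in $n$. Since $h$ was built to induce the same ordering on $\mathcal{S}_0^{(0)}$ as $\hat h(x) = d(x_0, x)$, we have $h(z_n) \to \infty$, so one can pick $N$ with $h(z_n) > B_D$ for all $n \geq N$.

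The last step is a direct unwinding of definitions. Fix $n \geq N$ and suppose for contradiction that $\bar\tau \in \pi_n(D) \cap \hat{S}_n$. Since $\bar\tau \in \hat{S}_n = St(z_n, U_n\backslash X)$, $z_n$ is a vertex of $\bar\tau$. Writing $\bar\tau = \pi_n(\tau)$ for some cell $\tau \subseteq D$, some vertex $v$ of $\tau$ satisfies $\pi_n(v) = z_n$, i.e.\ $v \in U_n \cdot z_n$. Because $h$ is $\mathbf{SL_3}(\QQ[t])$-invariant and $U_n \leq \mathbf{SL_3}(\QQ[t])$, this forces $h(v) = h(z_n) > B_D$, contradicting $v \in D^{(0)}$. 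Hence $\pi_n(D) \cap \hat{S}_n = \varnothing \subseteq \hat{S}_n^\downarrow$. I do not anticipate a serious obstacle; the lemma merely records that the focal vertices $z_n$ escape any fixed compact disk. The only mildly subtle point is remembering that a compact subset of the (non-locally-finite) complex $X$ still touches only finitely many cells.
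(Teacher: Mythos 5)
There is a genuine gap here, and it stems from misreading what the lemma has to deliver. Your argument bounds the height of \emph{all} of $D$ by compactness and concludes that $\pi_n(D)\cap\hat{S}_n$ is empty once $\bar h(z_n)$ exceeds that bound. This produces a threshold $N(D)$ depending on the particular disk. But the lemma is invoked, for a \emph{fixed} $n$, on the infinite family of disks $\gamma^{-1}\psi(B)$ (and on filling disks of $2$-spheres) appearing in the definition of $\Phi_n$, and the heights of the \emph{interiors} of these disks are unbounded: the filling disk $D_m$ of Lemma \ref{lemma:cycle} reaches up to $z_m$ for every $m$, and indeed $\pi_n(\psi(D_n))\cap\hat{S}^\downarrow_n=\sigma_n\neq\emptyset$ is exactly what makes $\Phi_n(\tilde\sigma_n)\neq 0$. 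So the statement you prove --- ``the intersection is eventually empty, with threshold depending on $D$'' --- is both too weak (no uniformity in $D$) and aimed at the wrong conclusion (the intersection must be allowed to be nonempty; the content of the lemma is that whatever does land in the star of $z_n$ is descending).

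Correspondingly, your remark that the hypothesis $\partial D\subset\psi(Y_0')$ is stronger than needed is backwards: that hypothesis is the entire engine of the intended proof, precisely because it constrains only the boundary and says nothing about the interior. The paper's argument runs: if some chamber of $\pi_n(D)$ in $\hat S_n$ were not descending, then there would be a chamber of $\pi_n(D)$ at $z_n$ with two vertices above $z_n$; using that $U_n\backslash X$ has no branching along walls of $\pi_n(\mathcal{A}_0)$ above $z_n$, one runs a geodesic ray from $z_n$ through the ascending part of $\pi_n(D)$, along which $\bar h$ strictly increases, until it exits through $\partial\pi_n(D)$. This forces a point of $\partial D\subset\psi(Y_0')$ to have height greater than $\bar h(z_n)$, which is impossible once $\bar h(z_n)$ exceeds the uniform bound on $\bar h(\psi(Y_0'))$ coming from Proposition \ref{boundedheight}. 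That threshold depends only on $\psi(Y_0')$, not on $D$, which is the uniformity the construction of $\Phi_n$ requires.
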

\begin{proof}
To prove the lemma, we will show that if any chamber in $\hat S_n - \hat S_n^\downarrow$ is contained in $\text{Supp}(\pi_n(D)\cap \hat S_n)$, then there exists a geodesic segment $\rho \subset \text{Supp}(\pi_n(D))$ with one endpoint at $z_n$ and the other endpoint at $z \in \partial\pi_n(D)$ with $h(z) > h(z_n)$, which contradicts the fact that $\bar h(\psi(Y_0'))$ is bounded above, since the sequence $\{z_n\}$ has unbounded height. 
There are two chambers in $\hat S_n - \hat S^\downarrow_n$ which have exactly one vertex which is higher than $z_n$.  If $\text{Supp}(\pi_n(D)\cap \hat S_n)$ contains either one of these two chambers, then it must also contain a chamber with two vertices that are higher than $z_n$, because there is a unique chamber adjacent to the ``upper" edge of this chamber. 

Let $\hat{\mathcal{C}}_1$ be the chamber in the support of $\pi_n(D)$ with $h(v) > h(z_n)$ for all vertices $v \neq z_n$. 

There is a face $\mathcal{F}_1$ of $\hat{\mathcal{C}}_{1}$ which is contained in $Lk(z_n, U_n \backslash X)$ such that $h(y) > h(z_n)$ for all $y \in \mathcal{F}_1$.  There is some vertex $v_1$ of $\mathcal{F}_1$ which is in $\pi_n(\mathcal{A}_0)$.
Because $U_n \backslash X$ has no branching along walls of $\pi_n(\mathcal{A}_0)$ which are above $z_n$, the geodesic ray in $\pi_n(\mathcal{A}_0)$ based at $z_n$ and passes through the vertex $v_1$ must eventually intersect $\partial \pi_n(D).$ Call this geodesic ray $\rho$, and notice that $\bar h \circ \rho$ is a strictly increasing function. Therefore, the point where $\rho$ intersects $\partial \pi_n(D)$ is strictly higher (with respect to $\bar h$) than $z_n$, which is a contradiction.
\end{proof}
 
Let $U_\Gamma = U\cap \Gamma$. By Lemma \ref{lemma:ustabz}, $U_\Gamma$ stabilizes $z_n$ and therefore $U_\Gamma \hat{S}^\downarrow_n = \hat{S}^\downarrow_n$ for every $n$.

\begin{lemma}  There is an infinite family of (nontrivial) cocycles $\Phi_n \in H^2(\Gamma \backslash Y; \QQ)$.
\end{lemma}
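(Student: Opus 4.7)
The plan is to pull back $\varphi_n$ along $\pi_n\circ\psi\colon Y\to U_n\backslash X$ and $\Gamma$-average. For a $2$-cell $[e]$ of $\Gamma\backslash Y$, choose a lift $\tilde e\in Y$ and set
$$\Phi_n([e]) \;=\; \sum_{[\gamma]\in\Gamma/U_\Gamma}\varphi_n\bigl(\pi_n(\psi(\gamma^{-1}\tilde e))\bigr),$$
where $\varphi_n$ is extended by zero off $\hat{S}^\downarrow_n$ and paired against the pushed-forward $2$-chain in $U_n\backslash X$. Each summand depends only on the coset $\gamma U_\Gamma$: the $U_\Gamma$-action on $U_n\backslash X$ factors through $U/U_n$, under which $\varphi_n$ is invariant by the preceding lemma. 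Changing the lift $\tilde e$ by an element of $\Gamma$ merely reindexes the sum, so $\Phi_n$ descends to a well-defined cochain on $\Gamma\backslash Y$.

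Finiteness of the averaging sum is the first delicate point. A summand is nonzero only when $\pi_n(\psi(\gamma^{-1}\tilde e))$ meets $\hat{S}^\downarrow_n$. By Proposition~\ref{prop:Y} and the $\Gamma$-invariance of $h$, $\psi(\tilde e)$ lies in a height-bounded region uniformly in $\gamma$, while chambers adjacent to $z_n$ have height comparable to $h(z_n)$, which grows with $n$. Combined with the previous lemma, which guarantees that for sufficiently large $n$ any intersection of $\pi_n(\psi(\gamma^{-1}\tilde e))$ with $\hat{S}_n$ already lies inside $\hat{S}^\downarrow_n$, a counting argument using the $U_\Gamma$-coset structure leaves only finitely many contributing cosets.

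The cocycle condition $\delta\Phi_n=0$ is essentially automatic. For a $3$-cell $[\tau]$ of $\Gamma\backslash Y$ with lift $\tilde\tau$, the chain $\partial\tilde\tau$ is a cellular $2$-cycle in $Y$. Because $X$ is $2$-dimensional and contractible, $C_3(X)=0$ and $H_2(X)=0$, so every $2$-cycle in $X$ vanishes as a chain. Hence $\psi_\ast(\partial\tilde\tau)=0$, each summand $\varphi_n(\pi_n\psi_\ast(\gamma^{-1}\partial\tilde\tau))$ vanishes, and $\Phi_n(\partial[\tau])=0$.

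Finally, nontriviality of each $\Phi_n$ is established by pairing with the cycle $\sigma_n$ exhibited above, which satisfies $\varphi_n(\hat\sigma_n)=-2$. Concretely, one lifts $\sigma_n$ to a $2$-chain $\alpha_n$ in $Y$ whose image in $\Gamma\backslash Y$ is a $2$-cycle; the pairing $\langle\Phi_n,[\alpha_n]\rangle$ then reduces (after only finitely many cosets survive the averaging) to $\varphi_n(\hat\sigma_n)\neq 0$. The main obstacle I expect is twofold: proving finiteness of the averaging sum and producing $\alpha_n$ explicitly as a cycle in $\Gamma\backslash Y$, since the chambers appearing in $\sigma_n$ are $U$- rather than $\Gamma$-translates of $\mathcal{C}_n$ and must be traced carefully through the construction of $Y$ from $\mathcal{C}_0$.
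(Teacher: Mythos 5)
Your approach matches the paper's: the same averaging formula over $\Gamma/U_\Gamma$, the same verification of well-definedness under change of coset representative and of lift, and the same cocycle argument resting on the fact that $X$ is $2$-dimensional (so $\psi_*$ of any $3$-chain vanishes, hence $\psi_*(\partial\tilde\tau)=0$). The paper defers nontriviality to Lemma~\ref{lemma:cycle} rather than addressing it inside this proof, and the finiteness of the averaging sum that you rightly flag as delicate is not addressed explicitly in the paper's proof either.
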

\begin{proof}
Given $\Gamma B$ a $2$-cell in $\Gamma \backslash Y$, let 

$$\Phi_n(\Gamma B) = \sum_{\gamma V_n \in \Gamma/V_n} \varphi_n(\pi_n(\gamma^{-1}\psi(B)) \cap \hat{S}^\downarrow_n)$$

$\Phi_n$ is well-defined, i.e. the value of $\Phi_n$ is independent of the choices of coset representatives $\gamma U$ and the choice of a lift $B$ for $\Gamma B$:
{First we check that replacing $\gamma$ with $\gamma u_\gamma$ (changing the coset representatives) does not change the value of $\Phi_n$:
\begin{align*}
&\displaystyle\sum_{(\gamma u_\gamma) U_\Gamma \in \Gamma/U_\Gamma}\varphi_n \left(   \pi_n((\gamma u_\gamma)^{-1}\psi(B)) \cap \hat{S}^\downarrow_n\right)\\
&= \displaystyle\sum_{(\gamma u_\gamma) U_\Gamma \in \Gamma/U_\Gamma}\varphi_n \left(   \pi_n(u_\gamma^{-1}\gamma^{-1}\psi(B)) \cap u_\gamma^{-1}\hat{S}^\downarrow_n\right)\\
&= \displaystyle\sum_{\gamma U_\Gamma \in \Gamma/U_\Gamma}\varphi_n \left(  (u_\gamma)^{-1} [ \pi_n(\gamma^{-1}\psi(B)) \cap  \hat{S}^\downarrow_n]\right)\\
&= \displaystyle\sum_{\gamma U_\Gamma \in \Gamma/U_\Gamma}\varphi_n\left(\pi_n(\gamma^{-1}\psi(B)) \cap \hat{S}^\downarrow_n\right) = \Phi_n(\Gamma B)
\end{align*}

Next we check that choosing a different lift of $\Gamma B$ does not change the value of $\Phi_n(\Gamma B)$.  If $y \in \Gamma$, then

\begin{align}
\Phi_n(\Gamma y B) &= \displaystyle\sum_{\gamma U_\Gamma \in \Gamma/U_\Gamma}\varphi_n \left(  \psi_n(\gamma^{-1}y B) \cap  \hat{S}^\downarrow_n\right)\\
&= \displaystyle\sum_{\gamma U_\Gamma \in \Gamma/U_\Gamma}\varphi_n \left(   \pi_n((y^{-1}\gamma)^{-1}\psi(B)) \cap \hat{S}^\downarrow_n\right)\\
&= \displaystyle\sum_{y\gamma U_\Gamma \in \Gamma/U_\Gamma}\varphi_n \left(   \pi_n((y^{-1}y\gamma)^{-1}\psi(B)) \cap \hat{S}^\downarrow_n\right)\\
&= \displaystyle\sum_{y\gamma U_\Gamma \in \Gamma/U_\Gamma}\varphi_n \left(   \pi_n(\gamma^{-1}\psi(B)) \cap \hat{S}^\downarrow_n\right)\\
&= \displaystyle\sum_{\gamma U_\Gamma \in \Gamma/U_\Gamma}\varphi_n \left(   \pi_n(\gamma^{-1}\psi(B)) \cap \hat{S}^\downarrow_n\right)\\
&= \Phi_n(\Gamma B)
\end{align}
}

In order to show that $\Phi_n$ is a cocycle in $H^{2}(\Gamma \backslash Y; \mathbb{Q})$, we will show that it is trivial on boundaries of $3$-disks, and thus is in the kernel of the coboundary map.

Let $\Gamma B^3$ be a $3$-cell in $\Gamma \backslash Y$, corresponding to the $3$-cell $B^3$ in $Y$.  Then $\partial (\Gamma B^3) = \Gamma (\partial B^3)$ is a $2$-sphere in $\Gamma \backslash Y$ and $\partial B^3$ is a $2$-sphere in $Y$.
Since $X$ contains no nontrivial $2$-spheres, the image of $\partial B^3$ under the map $\psi:~Y \to X$ is homotopic to a point.  Thus,
$$\Phi_n(\Gamma (\partial B^n))=\displaystyle\sum_{\gamma U_\Gamma \in \Gamma/U_\Gamma}\varphi_n \left(  \pi_n(\gamma^{-1}\psi(\partial B^3)) \cap \hat{S}^\downarrow_n\right) = 0$$

\end{proof}

\begin{lemma}  \label{lemma:cycle} For each $n$, there is a $2$-cycle $\tilde{\sigma}_n \in H_2(\Gamma \backslash Y; \QQ)$ such that $\Phi_n(\tilde\sigma_n) \neq 0$ and $\Phi_m(\tilde\sigma_n) = 0$ for $m \geq n+1$.
\end{lemma}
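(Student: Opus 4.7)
My plan is to produce $\tilde\sigma_n$ as the image in $\Gamma\backslash Y$ of a $2$-cycle $\tau_n$ of $Y$, designed so that $\psi(\tau_n)$ captures the relative $2$-cycle $\sigma_n$ near $z_n$. To lift $\sigma_n$ to $Y$, I use that $\bar\sigma_n = \partial\sigma_n \subset L^\downarrow_n$ is null-homologous in the contractible complex $X$; in fact $\sigma_n$ itself provides a filling at heights $\leq h(z_n)$. The $2$-cells of $Y$ beyond $Y_0$ were introduced in Proposition \ref{prop:Y} as filling disks in $X$ for $1$-cycles of $Y_0$, so I will select these disks so that, modulo pieces in $\psi(Y^{(1)})$, a signed combination of them reproduces $\sigma_n$ over $S^\downarrow_n$. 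Concretely, I assemble a $2$-chain $\tau_n \subset Y$ with $\psi(\tau_n) \cap S^\downarrow_n = \sigma_n$ (as relative chains in $(S^\downarrow_n, L^\downarrow_n)$) and whose remaining $\psi$-image lies at heights $\leq h(z_n)$. Any leftover boundary of $\tau_n$ lies in $Y^{(1)} = Y_0^{(1)}$ at bounded height, and can be capped off using the $1$-connectedness of $Y$; by $2$-connectedness, the cap can be chosen without introducing cells whose $\psi$-image exceeds $h(z_n)$. The result is a $2$-cycle $\tau_n$ in $Y$, and I set $\tilde\sigma_n$ to be its image in $\Gamma\backslash Y$.

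For the pairings, the crucial fact is that every vertex of $\psi(\tau_n)$ has height at most $h(z_n)$. Since $\bar h$ is $\Gamma$- and $U_m$-invariant, for $m \geq n+1$ every translate $u\gamma^{-1}\psi(\tau_n)$ (with $u\in U_m$, $\gamma\in\Gamma$) has all vertices at height $\leq h(z_n) < h(z_m)$, and so cannot contain $z_m$. Hence $\pi_m(\gamma^{-1}\psi(\tau_n)) \cap \hat S^\downarrow_m = \emptyset$ for every coset, so $\Phi_m(\tilde\sigma_n) = 0$. For $\Phi_n(\tilde\sigma_n)$, I evaluate coset by coset. The identity coset contributes $\varphi_n(\pi_n(\psi(\tau_n)) \cap \hat S^\downarrow_n) = \varphi_n(\hat\sigma_n) = -2$ by the earlier computation (the low-height portion of $\tau_n$ contributes nothing, being strictly below the descending star at $z_n$). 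For a non-identity coset represented by $\gamma \notin U_\Gamma$, the projection $\pi_n(\gamma^{-1} z_n)$ either lies in a $U_n$-orbit distinct from that of $z_n$---in which case $\gamma^{-1}\psi(\tau_n)$ projects away from $\hat S^\downarrow_n$ and contributes zero---or the construction of $\tau_n$ is arranged so that such extra contributions vanish. Thus $\Phi_n(\tilde\sigma_n) = -2 \neq 0$.

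The principal obstacle will be the lifting step. The chambers comprising $\sigma_n$ are not themselves cells of $Y_0 = \Gamma\cdot\mathcal{C}_0$: their vertices sit at heights unreached by $\Gamma$-translates of the vertices of $\mathcal{C}_0$, so they must be recovered only as interior pieces of filling disks introduced in constructing $Y_1$ and $Y$. Arranging those fillings to produce the signed combination $\sigma_n$ precisely over $S^\downarrow_n$, while keeping the low-height remainder from spuriously contributing to $\Phi_n$, is the technical heart of the argument. The $2$-connectedness of $Y$ together with the $\Gamma$-equivariance of $\psi$ are the key tools, but controlling the possible contributions from other $\Gamma/U_\Gamma$-cosets in the sum defining $\Phi_n$ will also require attention.
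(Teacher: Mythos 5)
Your overall plan—lift $\sigma_n$ to a $2$-chain in $Y$ and take its image in $\Gamma\backslash Y$—is the same as the paper's, but you leave the two load-bearing steps as announced intentions rather than arguments, and they are precisely where the paper's proof has content.

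The first gap is the lift itself. You say you will ``select'' the filling disks attached in the construction of $Y$ so that a signed combination of them reproduces $\sigma_n$ over $S^\downarrow_n$, but those $2$-cells were fixed once and for all in Proposition~\ref{prop:Y}; you do not get to choose them now, and you give no argument that some combination of the existing cells works. The paper instead builds the lift explicitly: push each vertex of $\partial\sigma_n$ down to $Y_0$ along an $h$-monotone path $p_i$, cone the edges of $\partial\sigma_n$ over these paths to obtain a $1$-sphere $\tilde\sigma_n$ lying in $Y_0$, and then use simple connectedness of $Y$ to find a disk $D_n\subset Y$ with $\psi(\partial D_n)=\tilde\sigma_n$. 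Setting $\tilde\sigma_n=p(D_n)$ gives the class in $\Gamma\backslash Y$ (after a bounded correction in $\Gamma\backslash Y_0\simeq\mathcal{C}_0$ to make it an honest cycle).

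The second gap is the height control. You assert ``by $2$-connectedness, the cap can be chosen without introducing cells whose $\psi$-image exceeds $h(z_n)$'', but $2$-connectedness gives no height bound. The ingredient you are missing is that $X$ is a contractible complex of dimension $2$, so $H_2(X;\QQ)=\ker\partial_2=0$: two $2$-chains with the same boundary are equal. Therefore $\psi(D_n)$, being a $2$-chain filling $\tilde\sigma_n$, must coincide with the explicit filling $\sigma_n+\sum_i d_i$ built from the monotone paths, so its maximum height is exactly $h(z_n)$ and its intersection with $S^\downarrow_n$ is exactly $\sigma_n$. That is what forces $\psi(D_n)\cap L^\downarrow_m=\emptyset$ for $m>n$ (giving $\Phi_m(\tilde\sigma_n)=0$) and what pins down the contribution near $z_n$ for the computation of $\Phi_n(\tilde\sigma_n)$. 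Your worry about contributions from other cosets of $\Gamma/U_\Gamma$ is legitimate, but it too resolves via this height statement: only $\gamma$ with $\gamma^{-1}z_n\in U_n z_n$ can contribute, and such $\gamma$ lie in $U_\Gamma$ up to trivially acting elements, so the sum is finite and nonzero. Without the uniqueness-of-fillings observation, neither the height bound nor the coset analysis in your sketch is supported.
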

This lemma is essentially proved in {\cite{Wortman2013}}. We restate it with minor adaptations of the notation.
\begin{proof}

$\partial \sigma_n$ is a $1$-sphere in $X$ with $h(\partial \sigma_n) < h(z_n)$. Let $v_1, \ldots, v_k$ be the vertices of $\partial \sigma_n$. For $1 \leq i \leq k$, choose a path $p_i:[0,1] \to X$ such that $p_i(0) = v_i$, $p_i(1) \in Y_0$, and $h\circ p_i$ is strictly decreasing. (One choice of $p_i$ would be to choose an efficient simplicial path to $\mathcal{C}_0$ if $v_i \in \mathcal{A}_0$, and an efficient simplicial path to $u\mathcal{C}_0$ if $v_i \in u\mathcal{A}_0$ for $u \in \langle u_1, u_2\rangle$.) Let $e_1, \ldots, e_m$ be the $1$-cells of $\partial \sigma_n$, with $\partial e_i = v_j \cup v_l$. For $1\leq i \leq m$, there is a homotopy relative $p_j(1)$ and $p_l(1)$ between $p_j\cup p_l \cup e_i$ and a path in $Y_0$. This homotopy gives a disk $d_i$, and $\cup_{i=1}^m d_i$ gives a homotopy between $\partial \sigma_n$ and a $1$-sphere $\tilde\sigma_n$ in $Y_0$. Since $Y$ is simply connected, there is a disk $D_n \subset Y$ with $\pi_n\circ\psi(\partial D_n) = \tilde \sigma_n$.  Because filling disks in $X$ are unique, $\pi_n\circ\psi(D_n) \cap \S^\downarrow_n = \sigma_n$. Let $p$ be the quotient map from $Y$ to $\Gamma \backslash Y$. Then $p(D_n)$ is a cycle, because $p(\partial D_n) \subset p(Y_0)$ is trivial. Take $\tilde{\sigma}_n = p(D_n)$. To complete the proof, note that the maximum value of $h$ on $\psi(D_n)$ is attained at $z_n$, and $h(x) > h(z_n)$ if $x \in Lk^{\downarrow}(z_m)$ for $m > n$, so $\psi(D_n) \cap L^{\downarrow}_m = \emptyset$.  
\end{proof}

This final lemma shows that $\{\tilde{\sigma}_n\}$ is an infinite independent family of 2-cycles in $H_2(\Gamma \backslash Y; \QQ)$, and thus completes the proof of Proposition \ref{prop:infdim}.

\section{Proof of the main result}

We now prove Theorem~\ref{theorem:main}.
\begin{proof}
Let $\mathscr{H}_q= \{H_q(\Gamma_\sigma; \QQ)\}$ and
consider the spectral sequence $$E^2_{p,q} = H_p(\Gamma \backslash Y, \mathscr{H}_q).$$

A common reference for this spectral sequence is \cite{Brown1982}. In section VII.8, it is shown that $$E^2_{p,q} \Rightarrow H_{p+q}(\Gamma; C(Y; \QQ))$$ where $C(Y; \QQ)$ is the cellular chain complex of $Y$ with coefficients in $\QQ$.

Because $Y$ is $2$-connected, there is a cellular map $f: Y \to \{\text{pt}\}$ which induces an isomorphism $f_*: H_i(Y; \QQ) \to H_i(\{\text{pt}\}; \QQ)$ for $0 \leq i \leq 2$.  Therefore, $f$ also induces an isomorphism $H_i(\Gamma; C(Y; \QQ)) \to H_i (\Gamma; C(\{\text{pt}\}; \QQ)) \cong H_i(\Gamma; \QQ)$ for $i\leq 2$.

The relevant terms of the spectral sequence are $E^r_{2, 0}$, and  $E^r_{0, 1}$ for $r\geq 2$.
First, we note that
$$E^2_{2, 0} = H_{2}(\Gamma \backslash Y, H_0(\Gamma_\sigma; \QQ))=H_{2}(\Gamma \backslash Y; \QQ).$$
We have demonstrated in Proposition \ref{prop:infdim} that $H_{2}(\Gamma \backslash Y; \QQ)$ is infinite dimensional.

Next, we note that when $q>0$, $$E^2_{p, q} = H_{p}(\Gamma \backslash Y, \{H_q(\Gamma_\sigma; \QQ)\}).$$

By Proposition \ref{prop:Y}, the cell stabilizers $\Gamma_\sigma$ are of type $F_1$, so $H_1(\Gamma_\sigma; \QQ)$ is finite dimensional for every $0$-cell $\sigma$ in $\Gamma\backslash Y$.

Since $\Gamma$ acts freely on $Y-Y_0$, and the image of $Y_0$ in the quotient consists of a single $2$-dimensional chamber, with finitely many subcells, $H_{0}(\Gamma \backslash Y, \{H_1(\Gamma_\sigma; \QQ)\})$ consists of finite sums in the form $$\sum_{i=0}^N a_i \sigma_i$$ where $a_i \in H_q(\Gamma_\sigma; \QQ)$.  Notice that $H_1(\Gamma_\sigma; \QQ) = 0$ for all but finitely many $\sigma_i$, and is always finite dimensional.  Thus $E^2_{0, 1} = H_{0}(\Gamma \backslash Y, \{H_1(\Gamma_\sigma; \QQ)\})$ is finite dimensional.

To compute $E^r_{2, 0}$ for $r>2$, we note that the kernel of any homomorphism $E^2_{2, 0} \to E^2_{0,1}$ must be infinite dimensional.  Later differentials emanating from $E^r_{2,0}$ are zero, since $E^r_{p,q} = 0$ outside the first quadrant. Thus $E^r_{2, 0}$ is infinite dimensional for all $r$.

In the limit, $H_{n-1}(\Gamma; \QQ)$ is infinite dimensional, and thus $H^{n-1}(\Gamma; \QQ)$ is infinite dimensional as well.

\end{proof}

\bibliographystyle{amsalpha}
\bibliography{general.bib}

\end{document}